\documentclass[a4paper,10pt]{article}
\usepackage{amsmath,amssymb,amsfonts,amsthm}
\usepackage{graphicx,color,subfigure}
\usepackage{enumerate,multirow}
\usepackage{fullpage}
\usepackage[]{hyperref}
\usepackage[normalem]{ulem}

\newcommand\RR{\ensuremath{\mathbb{R}}}

\newcommand\eps{\ensuremath{\varepsilon}}
\newcommand\Bn{\ensuremath{\mathbb{B}^n}}

\newtheorem{thm}{Theorem}[section]
\newtheorem{prop}[thm]{Proposition}

\newtheorem{lem}[thm]{Lemma}

\title{Pleijel's nodal domain theorem for Neumann and Robin eigenfunctions}
\author{Corentin L\'ena\footnote{Dipartimento di Matematica \emph{Giuseppe Peano}, Universit\`a degli Studi di Torino, Via Carlo Alberto, 10, 10123 Torino (TO), Italia, \texttt{clena@unito.it}}}
	
\begin{document}
	\maketitle
	
	\begin{abstract}
		In this paper, we show that equality in Courant's nodal domain theorem can only be reached for a finite number of eigenvalues of the Neumann Laplacian, in the case of an open, bounded and connected set in $\RR^n$ with a $C^{1,1}$ boundary. This result is analogous to Pleijel's nodal domain theorem for the Dirichlet Laplacian (1956). It confirms, in all dimensions, a conjecture formulated by Pleijel, which had already been solved by I.~Polterovich for a two-dimensional domain with a piecewise-analytic boundary (2009). We also show that the argument and the result extend to a class of Robin boundary conditions.
	\end{abstract}
	
	\paragraph{Keywords.}    Neumann eigenvalues, Robin eigenvalues, Nodal domains, Courant's theorem, Pleijel's theorem.
	
	\paragraph{MSC classification.}  	Primary: 35P05; Secondary: 35P15, 35P20, 58J50.
	
\section{Introduction}
\label{secIntro}	
\subsection{Problem and results}
The main objective of this paper is to extend Pleijel's nodal domain theorem to eigenfunctions of the Laplacian which satisfy a Neumann boundary condition. Let  $\Omega\subset \RR^n$, with $n\ge2$, be a connected open set, which we assume to be bounded, with a sufficiently regular boundary. For technical reasons (appearing in Section \ref{secIPP}), we ask for $\partial \Omega$ to be $C^{1,1}$. In the rest of the paper, we denote by $-\Delta_{\Omega}^N$ the self-adjoint realization of the (non-negative) Laplacian in $\Omega$, with the Neumann boundary condition, and by $(\mu_k(\Omega))_{k\ge 1}$ its eigenvalues, arranged in non-decreasing order and counted with multiplicities. Similarly, in the case of the Dirichlet boundary condition, we denote the self-adjoint realization of the Laplacian by $-\Delta_{\Omega}^D$ and its eigenvalues by $(\lambda_k(\Omega))_{k\ge 1}$.

For any function $f$ continuous in $\Omega$, we call \emph{nodal set} of $f$ the closed set 
\begin{equation*}
 \mathcal{N}(f):=\overline{\{x \in \Omega \,;\, f(x)=0\}}
\end{equation*}
and \emph{nodal domains} the connected components of $\Omega \setminus \mathcal N(f)$. We denote by $\nu(f)$ the cardinal of the set of nodal domains. We are interested in estimating $\nu(u)$ from above when $u$ is an eigenfunction of $-\Delta^N_{\Omega}$. A fundamental result of this type was first obtained by R.~Courant in 1923 (see \cite{Cou23Nodal} or \cite[VI.6]{CouHil53}).
\begin{thm}
	\label{thmCourant}
	If $k$ is a positive integer and $u$ an eigenfunction associated with $\lambda_k(\Omega)$ or $\mu_k(\Omega)$, $\nu(u)\le k$.
\end{thm}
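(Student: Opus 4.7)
The plan is to run the classical contradiction argument based on the variational characterization of eigenvalues (the min-max principle) combined with unique continuation. Assume for contradiction that $u$ is an eigenfunction associated with $\lambda_k(\Omega)$ (the Neumann case with $\mu_k(\Omega)$ is handled in exactly the same way, replacing $H_0^1(\Omega)$ by $H^1(\Omega)$ throughout), and that $u$ admits at least $k+1$ nodal domains $D_1, \dots, D_{k+1}$. For each $i$, set
\begin{equation*}
 v_i := u \, \mathbf{1}_{D_i},
\end{equation*}
i.e.\ $u$ restricted to $D_i$ and extended by zero. Since $u\in H_0^1(\Omega)$ vanishes on $\partial D_i \cap \Omega$, one checks that $v_i \in H_0^1(\Omega)$, and that $v_i$ satisfies $\int_\Omega \nabla v_i \cdot \nabla \varphi = \lambda_k \int_\Omega v_i \varphi$ for all $\varphi$ supported in $D_i$; taking $\varphi = v_i$ itself gives $\int_\Omega |\nabla v_i|^2 = \lambda_k \int_\Omega v_i^2$.

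Now I would exploit the fact that the $v_i$ have pairwise disjoint supports. Let $\phi_1, \dots, \phi_{k-1}$ be an $L^2$-orthonormal family of eigenfunctions associated with $\lambda_1(\Omega), \dots, \lambda_{k-1}(\Omega)$. Requiring $v := \sum_{i=1}^{k} c_i v_i$ to be $L^2$-orthogonal to each of these imposes $k-1$ homogeneous linear conditions on $k$ unknowns $c_1,\dots,c_k$, so there exists a nontrivial choice of coefficients giving a nonzero $v \in H_0^1(\Omega)$. Because the supports are disjoint, a direct computation yields
\begin{equation*}
 \int_\Omega |\nabla v|^2 = \sum_{i=1}^{k} c_i^2 \int_\Omega |\nabla v_i|^2 = \lambda_k \sum_{i=1}^{k} c_i^2 \int_\Omega v_i^2 = \lambda_k \int_\Omega v^2,
\end{equation*}
so the Rayleigh quotient of $v$ equals $\lambda_k$.

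By the min-max principle, $\lambda_k$ is the minimum of the Rayleigh quotient over the orthogonal complement of $\mathrm{span}(\phi_1,\dots,\phi_{k-1})$ in $H_0^1(\Omega)$; the function $v$ achieves this minimum and must therefore be an eigenfunction associated with $\lambda_k(\Omega)$, in particular it is real-analytic in $\Omega$. But by construction $v$ vanishes identically on the open nodal domain $D_{k+1}$, which was not used in its definition. Real-analyticity of Laplacian eigenfunctions (or, more generally, the strong unique continuation property) then forces $v \equiv 0$ on the connected set $\Omega$, contradicting the nontrivial choice of the $c_i$. This contradiction yields $\nu(u) \leq k$.

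I expect the core argument itself to be entirely standard; the only delicate point is the justification in step one that $v_i \in H_0^1(\Omega)$ (resp.\ $H^1(\Omega)$ for the Neumann case) and that it satisfies the energy identity $\int_\Omega |\nabla v_i|^2 = \lambda_k \int_\Omega v_i^2$. In the Dirichlet case this is classical and follows from the fact that $u$ vanishes on $\partial D_i \cap \Omega$ and, by assumption, on $\partial \Omega$. In the Neumann setting the potential difficulty is that nodal domains may meet $\partial \Omega$, so one must argue with $H^1(\Omega)$-traces rather than $H_0^1(\Omega)$; the $C^{1,1}$ regularity of $\partial \Omega$ assumed in the paper is precisely what will make this work cleanly.
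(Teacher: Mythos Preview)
The paper does not prove Theorem~\ref{thmCourant}; it is stated as a classical result of Courant and attributed to \cite{Cou23Nodal} and \cite[VI.6]{CouHil53}. Your argument is the standard one and is correct.

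One remark: the ``delicate point'' you flag at the end, namely the energy identity $\int_{D_i}|\nabla u|^2=\mu_k\int_{D_i}u^2$ in the Neumann case, is exactly Proposition~\ref{propGreen} of the paper, which the author does prove carefully in Section~\ref{secIPP}. The difficulty there is not really the boundary regularity of $\Omega$ but the possible irregularity of $\partial D_i\cap\Omega$ in dimension $n\ge 3$, where nodal domains need not be Lipschitz; the paper handles this by approximating $D_i$ with super-level sets $\{u>\alpha\}$ for regular values $\alpha\to 0$ (Sard's theorem) and passing to the limit. Also note that your justification ``taking $\varphi=v_i$'' in the weak formulation on $D_i$ is not quite right as stated, since $v_i$ is not compactly supported in $D_i$; the clean route is to test the weak equation for $u$ on all of $\Omega$ against $v_i\in H^1(\Omega)$ (resp.\ $H_0^1(\Omega)$), which immediately gives $\int_\Omega|\nabla v_i|^2=\int_\Omega\nabla u\cdot\nabla v_i=\lambda_k\int_\Omega u\,v_i=\lambda_k\int_\Omega v_i^2$.
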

\AA.~Pleijel showed in 1956 that, in the Dirichlet case, equality in the previous theorem can only occur for a finite number of eigenvalues. He originally proved it for domains in $\RR^2$ \cite{Ple56}. The result was extended by J. Peetre in 1957  to some domains on two-dimensional Riemannian manifolds \cite{Peetre1957Courant}, and a general version, valid for $n$-dimensional Riemannian manifolds with or without boundary, was obtained by P.~B\'erard and D.~Meyer in 1982 \cite[II.7]{BerMey82}. In those three works, the authors actually proved a stronger result, in the form of an asymptotic upper bound. To state it, let us denote by $\nu_k^D(\Omega)$ the largest possible value of $\nu(u)$, when $u$ is an eigenvalue of $-\Delta_{\Omega}^D$ associated with $\lambda_k(\Omega)$. Let us also define
\begin{equation*}
\gamma(n):=\frac{2^{n-2}n^2\Gamma\left(\frac{n}2\right)^2}{j_{\frac{n}2-1,1}^n},
\end{equation*}
where $j_{\frac{n}2-1,1}$ is the smallest positive zero of the Bessel function of the first kind $J_{\frac{n}2-1}$. We recall the inequality $\gamma(n)<1$, proved in \cite[II.9]{BerMey82} (see also \cite[Section 5]{HelPer16PAMS} for more precise results).
\begin{thm} \label{thmPleijelDir} If $\Omega\subset \RR^n$ is an open, bounded, and connected set which is Jordan measurable,
	\begin{equation*}
		\limsup_{k\to +\infty}\frac{\nu_k^D(\Omega)}{k}\le \gamma(n).
	\end{equation*} 	
\end{thm}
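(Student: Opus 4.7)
The strategy is the classical Faber--Krahn plus Weyl argument used by Pleijel in dimension two and extended to all dimensions by B\'erard and Meyer. The key bridge between the two ingredients is the elementary observation that, if $u$ is an eigenfunction of $-\Delta^D_{\Omega}$ associated with $\lambda_k(\Omega)$ and $D$ is one of its nodal domains, then $u|_D$ extended by zero outside $D$ belongs to $H^1_0(D)$ and is a first Dirichlet eigenfunction of $D$; in particular $\lambda_1(D)=\lambda_k(\Omega)$.

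With this in hand, I would fix, for each $k$, an eigenfunction $u$ attaining $\nu^D_k(\Omega)$ and label its nodal domains $D_1,\dots,D_{\nu^D_k(\Omega)}$. The sharp Faber--Krahn inequality,
\begin{equation*}
	\lambda_1(D_j)\,|D_j|^{2/n} \ge j_{\frac{n}{2}-1,1}^2 \,\omega_n^{2/n},
\end{equation*}
where $\omega_n$ denotes the volume of the unit ball in $\RR^n$, together with $\lambda_1(D_j)=\lambda_k(\Omega)$, gives a uniform lower bound on each $|D_j|$. Summing and using that the $D_j$ are pairwise disjoint in $\Omega$ yields
\begin{equation*}
	\nu^D_k(\Omega) \;\le\; \frac{|\Omega|\,\lambda_k(\Omega)^{n/2}}{j_{\frac{n}{2}-1,1}^n\,\omega_n}.
\end{equation*}

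The final step is to divide by $k$ and apply Weyl's law. Jordan measurability of $\Omega$ is exactly what is needed for the leading-order Dirichlet Weyl asymptotics via Dirichlet--Neumann bracketing, giving $\lambda_k(\Omega)^{n/2}/k \to (2\pi)^n / (\omega_n|\Omega|)$. Substituting into the previous inequality produces the limsup bound $(2\pi)^n / (j_{\frac{n}{2}-1,1}^n\,\omega_n^2)$, and a routine simplification using $\omega_n = 2\pi^{n/2}/(n\Gamma(n/2))$ identifies this constant with $\gamma(n)$, yielding the claim.

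The two genuinely substantive ingredients are imported: the sharp Faber--Krahn inequality, proved via symmetric decreasing rearrangement and the variational characterization of $\lambda_1$, and Weyl's law under mere Jordan measurability. Within the argument itself the most delicate point is the bridge step, i.e. verifying that $u|_{D_j}$ extended by zero really is an admissible $H^1_0(D_j)$ first eigenfunction; this is standard in the Dirichlet setting but is precisely what fails for Neumann or Robin conditions, since a nodal domain that meets $\partial\Omega$ inherits a mixed boundary condition rather than a pure Dirichlet one. That is presumably where the bulk of the paper's new work lies, and it is the main reason the above proof does not transfer directly to the setting the author treats.
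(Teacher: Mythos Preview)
Your proposal is correct and follows precisely the classical Pleijel/B\'erard--Meyer argument that the paper itself sketches in its overview: apply Faber--Krahn to each nodal domain using $\lambda_1(D)=\lambda_k(\Omega)$, sum the resulting volume bounds, and conclude via Weyl's law. The paper does not give its own proof of this Dirichlet statement---it is quoted as background with references to \cite{Ple56} and \cite{BerMey82}---so there is nothing further to compare; your closing remark that the bridge step $\lambda_1(D)=\lambda_k(\Omega)$ is exactly what breaks down for Neumann or Robin conditions is also the point the paper makes to motivate its new contribution.
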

Theorem \ref{thmPleijelDir} is actually proved in \cite{BerMey82} for closed Riemannian manifolds, or Riemannian manifolds with smooth boundary in the Dirichlet case. However, the results in \cite{BerMey82} do not include the Neumann case. Let us note that the Jordan measurability is imposed in Theorem \ref{thmPleijelDir} so that the Weyl's asymptotics holds for the sequence $(\lambda_k(\Omega))_{k\ge 1}$ (see \cite[Section XIII.15]{ReeSim78}). The hypothesis that $\partial \Omega$ is $C^{1,1}$ would be more than sufficient.
 
The constant $\gamma(n)$ has the following interpretation:
\begin{equation*}
	\gamma(n)=\frac{(2\pi)^n}{\lambda_1(\Bn)^{\frac{n}2}\omega_n},
\end{equation*}
where $\omega_n$ is the volume of the unit ball in $\RR^n$, and $\Bn$ is a ball of volume $1$ in $\RR^n$. It was proved in \cite[Section 5]{HelPer16PAMS} that the sequence $(\gamma(n))_{n\ge 2}$ is strictly decreasing and converges to $0$ exponentially fast.

Since $\gamma(n)<1$, Theorem \ref{thmPleijelDir} implies that there exists a finite smallest rank $k_D(\Omega)\ge 1$ such that, for all $k>k_D(\Omega)$, an eigenfunction  of $-\Delta_{\Omega}^D$ associated with $\lambda_k(\Omega)$ as strictly less than $k$ nodal domains. Two recent papers \cite{BerHel2016Geom,BergGittins2016JST} give upper bounds of $k_D(\Omega)$ in term of the geometry of $\Omega$.

As in the Dirichlet case, let us denote by $\nu_k^N(\Omega)$ the largest possible value for $\nu(u)$ when $u$ is an eigenfunction of $-\Delta_{\Omega}^N$ associated with $\mu_k(\Omega)$. The question of finding an asymptotic upper bound of $\nu_k^N(\Omega)$ was already raised by \AA. Pleijel, who showed that, for a square, the same upper bound as in the Dirichlet case holds true \cite[Section 7]{Ple56}. To our knowledge, the most general result known so far involving $\nu_k^N(\Omega)$ has been obtained by I.~Polterovich in 2009 \cite{Pol09}. His proof uses estimates by J.~A.~Toth and S.~Zelditch \cite{TotZel09} of the number of nodal lines touching the boundary, and is therefore restricted to two-dimensional domains with quite regular boundaries.
\begin{thm} \label{thmPolterovich} If $\Omega\subset \RR^2$ is an open, bounded, and connected set with a piecewise-analytic boundary,
	\begin{equation*}
	\limsup_{k\to +\infty}\frac{\nu_k^N(\Omega)}{k}\le \gamma(2)=\frac4{j_{0,1}^2}.
	\end{equation*} 	
\end{thm}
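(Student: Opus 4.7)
My plan is to adapt the standard Pleijel argument used in the Dirichlet case by separating the nodal domains of an eigenfunction $u$ associated with $\mu_k(\Omega)$ into two families. Call $D$ an \emph{interior} nodal domain if $\overline{D}\subset \Omega$, and a \emph{boundary} nodal domain if $\overline{D}\cap \partial\Omega\neq\emptyset$. On an interior nodal domain, $u|_D$ vanishes on all of $\partial D$, hence is a first Dirichlet eigenfunction of $D$ with eigenvalue $\mu_k(\Omega)$, and the Dirichlet-type argument applies verbatim. On a boundary nodal domain, $u|_D$ satisfies a Dirichlet condition on the interior part of $\partial D$ and a Neumann condition on $\partial D\cap \partial\Omega$, so the Faber--Krahn inequality cannot be used directly; these boundary nodal domains have to be controlled by a separate mechanism. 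Let $N_{\mathrm{int}}(u)$ and $N_{\mathrm{bd}}(u)$ denote the corresponding counts, so $\nu(u)=N_{\mathrm{int}}(u)+N_{\mathrm{bd}}(u)$.

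For the interior nodal domains, the Faber--Krahn inequality in $\RR^2$ gives
\begin{equation*}
|D|\ge \frac{\pi j_{0,1}^2}{\lambda_1(D)}=\frac{\pi j_{0,1}^2}{\mu_k(\Omega)},
\end{equation*}
and summing over the interior nodal domains, bounded above by $|\Omega|$, yields $N_{\mathrm{int}}(u)\le |\Omega|\mu_k(\Omega)/(\pi j_{0,1}^2)$. Dividing by $k$ and using the Neumann Weyl asymptotics $\mu_k(\Omega)\sim 4\pi k/|\Omega|$, valid here since a piecewise analytic boundary is in particular Jordan measurable, produces the ratio $\gamma(2)=4/j_{0,1}^2$ asymptotically.

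To handle the boundary nodal domains I would invoke the bound of Toth and Zelditch \cite{TotZel09}, which is the specific input that restricts Theorem \ref{thmPolterovich} to piecewise analytic boundaries in dimension two: the number of zeros on $\partial\Omega$ of the restriction $u|_{\partial\Omega}$ is $O(\mu_k(\Omega)^{\alpha})$ with $\alpha<1/2$, hence, via Weyl's law, $o(k)$. Since these zeros partition $\partial\Omega$ into arcs of constant sign, and each such arc lies in the closure of at most one nodal domain, this gives $N_{\mathrm{bd}}(u)=o(k)$. Combining,
\begin{equation*}
\nu(u)\le \frac{|\Omega|\,\mu_k(\Omega)}{\pi j_{0,1}^2}+o(k),
\end{equation*}
and dividing by $k$ and letting $k\to\infty$ establishes $\limsup_k \nu_k^N(\Omega)/k\le \gamma(2)$.

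The main obstacle, and the exact reason for the hypothesis of a piecewise analytic boundary, is the boundary-zero estimate of Toth--Zelditch, whose proof relies on pluri-subharmonic extension of eigenfunctions across $\partial\Omega$ through real-analytic charts. There is no available substitute of this input under mere $C^{1,1}$ regularity, nor in dimension $n\ge 3$ where even for analytic boundaries the nodal set meets $\partial\Omega$ in an $(n-2)$-dimensional stratum rather than in isolated points. Replacing this ingredient by a different way of discarding the contribution of boundary nodal domains is precisely what the present paper must accomplish to prove the Neumann analogue of Pleijel's theorem.
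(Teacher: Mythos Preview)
Your reconstruction is essentially the proof the paper attributes to Polterovich \cite{Pol09}: split into interior and boundary nodal domains, apply Faber--Krahn to the interior ones, and control the boundary ones via the Toth--Zelditch estimate. The paper does not give its own proof of Theorem~\ref{thmPolterovich}; it quotes the result and summarizes the method in one line (``the number of nodal domains touching the boundary is controlled by $\sqrt{\mu_k(\Omega)}$''). So your proposal and the paper's account of the original argument coincide.

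One correction: the Toth--Zelditch bound is $O(\sqrt{\mu_k})$, i.e.\ exponent $\alpha=\tfrac12$, not $\alpha<\tfrac12$. This does not affect your argument, since $\sqrt{\mu_k}\sim C\sqrt{k}=o(k)$ by Weyl's law, but the inequality you stated is sharper than what is actually available. Also, your combinatorial link between boundary nodal domains and sign arcs on $\partial\Omega$ tacitly assumes that a boundary nodal domain meets $\partial\Omega$ in a set of positive length; in the piecewise-analytic setting the nodal structure near $\partial\Omega$ is tame enough that this is harmless, and Polterovich in fact phrases the estimate directly in terms of nodal lines hitting $\partial\Omega$ rather than boundary zeros of $u$.

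Your closing paragraph correctly identifies the limitation: the Toth--Zelditch input has no known analogue for $C^{1,1}$ boundaries or in dimension $n\ge 3$, and the paper's own contribution (Theorems~\ref{thmPleijelNeu} and~\ref{thmPleijelRob}) replaces it by an $L^2$-mass dichotomy (bulk vs.\ boundary domains defined by where the mass concentrates, not by whether $\overline D$ meets $\partial\Omega$) together with a reflection argument and Faber--Krahn on the reflected domain.
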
 
In this paper, we prove the following result, valid in any dimension.
\begin{thm}
	\label{thmPleijelNeu}
	If $\Omega \subset \RR^n$, with $n\ge 2$, is an open, bounded, and connected set with a $C^{1,1}$ boundary,
	\begin{equation*}
	\limsup_{k\to +\infty}\frac{\nu_k^N(\Omega)}{k}\le \gamma(n).
	\end{equation*}
\end{thm}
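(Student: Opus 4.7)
My plan is to adapt the B\'erard-Meyer proof of Theorem \ref{thmPleijelDir} to the Neumann setting. Fix an eigenfunction $u$ of $-\Delta_{\Omega}^N$ associated with $\mu_k(\Omega)$, and let $D_1, \dots, D_{\nu(u)}$ be its nodal domains. I split them into interior nodal domains ($\overline{D_i} \subset \Omega$) and boundary nodal domains ($\overline{D_i} \cap \partial \Omega \ne \emptyset$), with cardinals $N_{\mathrm{int}}$ and $N_{\partial}$, and bound each separately.

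For each interior nodal domain $D_i$, the restriction $u|_{D_i}$ extends by zero to an $H^1_0(D_i)$ function satisfying $-\Delta u = \mu_k u$ in $D_i$; being of constant sign, it is the first Dirichlet eigenfunction of $D_i$, so $\lambda_1(D_i) = \mu_k(\Omega)$. The Faber-Krahn inequality gives $|D_i| \ge (\lambda_1(\Bn)/\mu_k(\Omega))^{n/2}$, hence, summing and using $\sum_{\mathrm{int}}|D_i|\le|\Omega|$,
\[
N_{\mathrm{int}} \le |\Omega|\left(\frac{\mu_k(\Omega)}{\lambda_1(\Bn)}\right)^{n/2}.
\]
Since the $C^{1,1}$ hypothesis guarantees Weyl's law for the Neumann eigenvalues, namely $\mu_k(\Omega) \sim (2\pi)^2 (k/(\omega_n|\Omega|))^{2/n}$, the identity $\gamma(n) = (2\pi)^n/(\lambda_1(\Bn)^{n/2}\omega_n)$ gives $N_{\mathrm{int}}/k \le \gamma(n) + o(1)$.

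The heart of the proof is therefore to show $N_{\partial} = o(k)$. My plan is to prove that every boundary nodal domain $D$ occupies a patch of $\partial \Omega$ of $(n-1)$-measure at least $c(\Omega)\mu_k(\Omega)^{-(n-1)/2}$, which will force $N_\partial \le c(\Omega)^{-1}|\partial\Omega|_{n-1}\mu_k(\Omega)^{(n-1)/2} = O(k^{(n-1)/n}) = o(k)$. To produce such a patch at a point $x_0 \in \partial D \cap \partial \Omega$ where $u$ is locally nonzero, I would straighten $\partial \Omega$ near $x_0$ via a $C^{1,1}$ diffeomorphism (whose existence is precisely what the regularity assumption guarantees) and extend $u$ across the straightened boundary by even reflection, an operation made licit by the Neumann condition $\partial_\nu u = 0$. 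The reflected function solves a uniformly elliptic eigenvalue equation with bounded measurable coefficients and eigenvalue $\mu_k(\Omega)$; a quantitative positivity estimate (Harnack-type, or a nodal inradius bound) then yields a ball $B(x_0, c\mu_k(\Omega)^{-1/2})$ on which $u$ has constant sign, and intersecting with $\partial\Omega$ gives the desired patch.

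The main obstacle is this boundary analysis: the $C^{1,1}$ reflection yields an operator with only $L^\infty$ principal part, so the inradius bound must remain quantitatively robust under this loss of smoothness, and uniform in $k$ and in the base point $x_0 \in \partial\Omega$. The integration-by-parts identities alluded to in Section \ref{secIPP} are, I expect, the technical device that makes this step rigorous, perhaps by providing a direct control of $|u|$ near $\partial\Omega$ that obviates the reflection. Once $N_{\mathrm{int}}/k \le \gamma(n) + o(1)$ and $N_{\partial} = o(k)$ are both in hand, adding them yields $\limsup \nu_k^N(\Omega)/k \le \gamma(n)$, and the same scheme should carry over to Robin conditions, since the Robin boundary term behaves as a bounded perturbation after reflection.
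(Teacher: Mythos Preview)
Your interior-domain argument is fine, and the overall architecture---split the nodal domains, handle the interior ones by Faber--Krahn plus Weyl, and show the rest are $o(k)$---matches the paper's. The gap is in your boundary step. You want every nodal domain touching $\partial\Omega$ to contain a ball $B(x_0,c\,\mu_k^{-1/2})$ with $x_0\in\partial\Omega$, so that its trace on $\partial\Omega$ has $(n{-}1)$-measure $\ge c'\mu_k^{-(n-1)/2}$. But after a $C^{1,1}$ straightening and even reflection, the resulting operator has only $L^\infty$ principal coefficients, and no inradius bound of the form $\rho(D)\ge c\,\mu^{-1/2}$ is known in that generality; indeed, even on smooth closed manifolds the sharp exponent $-1/2$ for the inner radius of nodal domains is open in dimension $n\ge 3$ (the best known exponents, due to Mangoubi, are strictly worse). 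Moreover, an inradius bound for $D$ would not by itself place the inscribed ball near $\partial\Omega$: a domain can touch $\partial\Omega$ in a set of arbitrarily small $(n{-}1)$-measure while still having large interior inradius. So the patch estimate you need does not follow from the tools you invoke, and the paper explicitly notes that controlling the number of nodal domains touching the boundary by $\mu_k^{(n-1)/2}$ is precisely what is unavailable beyond the two-dimensional piecewise-analytic case.

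The paper's proof sidesteps this entirely. Instead of the geometric dichotomy ``touches $\partial\Omega$ / does not touch $\partial\Omega$'', it uses an $L^2$-mass dichotomy relative to a boundary layer $\partial\Omega_{A\delta}^+$ with $\delta=\mu_k^{-\theta}$, $\theta\in(0,1/2)$: a domain is \emph{bulk} if most of its $L^2$-mass lies away from the layer, and \emph{boundary} otherwise. Bulk domains are handled by Faber--Krahn applied to a cutoff $\varphi_0^\delta u$ (so they need not avoid $\partial\Omega$ entirely), at the cost of an $\varepsilon$-error that is removed at the end. For each boundary domain one localizes further to a chart, straightens, reflects the cutoff across $\{y_n=0\}$, and applies Faber--Krahn to the reflected set; the resulting lower bound on $|D\cap\partial\Omega_{A\delta}^+|$ sums to $|\partial\Omega_{A\delta}^+|\sim C\delta=C\mu_k^{-\theta}$, which gives $\nu_1\lesssim \mu_k^{-\theta}\mu_k^{n/2}=o(\mu_k^{n/2})$. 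Only Faber--Krahn, the Green identity of Proposition~\ref{propGreen}, and the layer-volume estimate are used---no pointwise or Harnack-type control on $u$ is needed. Your reflection idea is the right ingredient, but it should feed into a volume comparison in a thin tube, not into a boundary inradius bound.
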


Furthermore, our proof can be quite easily extended to some Robin-type boundary conditions. Let us be more specific: we assume that $\Omega$ satisfies the same hypotheses as above and that $h$ is a Lipschitz function in $\overline \Omega$ such that $h\ge 0$ on $\partial \Omega$. By analogy with the Dirichlet and Neumann cases, we denote by $-\Delta_{\Omega}^{R,h}$ the self-adjoint realization of the Laplacian in $\Omega$ with the Robin boundary  condition
\begin{equation*}
\frac{\partial u}{\partial \nu}+hu=0 \mbox{ on }\partial \Omega.
\end{equation*}
Here $\frac{\partial u}{\partial \nu}$ is the exterior normal derivative. We denote by $(\mu_k(\Omega,h))_{k\ge 1}$ the eigenvalues of $-\Delta_{\Omega}^{R,h}$ and by $\nu_k^{R}(\Omega,h)$ the maximal number of nodal domains for an eigenfunction of $-\Delta^{R,h}_{\Omega}$ associated with $\mu_k(\Omega,h)$. Let us note that in dimension $2$, this eigenvalue problem gives the natural frequencies of a membrane elastically held at its boundary \cite[9.5]{Weinstock1974CV}. The condition $h\ge 0$ implies that each point $x$ on the boundary is subject either to no force (if $h(x)=0$) or to a binding elastic force pulling it back to its equilibrium position (if $h(x)>0$).  We prove the following result.
\begin{thm}
	\label{thmPleijelRob}
	If $\Omega \subset \RR^n$, with $n\ge 2$, is an open, bounded, and connected set with a $C^{1,1}$ boundary, and if $h$ is a Lipschitz function in $\overline \Omega$ with $h\ge 0$ on $\partial \Omega$,
	\begin{equation*}
	\limsup_{k\to +\infty}\frac{\nu_k^R(\Omega,h)}{k}\le \gamma(n).
	\end{equation*}
\end{thm}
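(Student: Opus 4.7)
My plan is to mirror the B\'erard--Meyer scheme that proves Theorem \ref{thmPleijelDir}, combining Weyl's law with a Faber--Krahn estimate for each nodal domain, and to handle the boundary nodal domains by a local reflection argument in $C^{1,1}$ coordinates.

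The first step is to establish the Weyl asymptotic
\begin{equation*}
\mu_k(\Omega,h)\sim \frac{(2\pi)^2}{(\omega_n|\Omega|)^{2/n}}\,k^{2/n}\quad\text{as }k\to\infty.
\end{equation*}
Since $h\ge 0$ is Lipschitz, the Robin quadratic form is sandwiched between the Neumann and Dirichlet ones up to a bounded relative perturbation, so this asymptotic follows from the classical Weyl law on $C^{1,1}$ domains. Next I fix an eigenfunction $u$ associated with $\mu_k(\Omega,h)$ that attains $\nu_k^R(\Omega,h)$ nodal domains, and I split the family $\{D_i\}$ of nodal domains into \emph{interior} ones, for which $\overline{D_i}\subset\Omega$, and \emph{boundary} ones, for which $\overline{D_i}\cap\partial\Omega\ne\emptyset$. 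On any interior $D_i$ the restriction $u|_{D_i}$ is a first Dirichlet eigenfunction of $-\Delta$ with eigenvalue $\mu_k(\Omega,h)$, so the classical Faber--Krahn inequality gives $|D_i|\ge \lambda_1(\Bn)^{n/2}\mu_k(\Omega,h)^{-n/2}$; summing over the interior indices and inserting the Weyl asymptotic yields
\begin{equation*}
\#\{\text{interior nodal domains}\}\le \gamma(n)\,k\,(1+o(1)).
\end{equation*}

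The principal obstacle is to show that the number of boundary nodal domains is $o(k)$. On such a $D_i$, $u|_{D_i}$ solves a mixed Dirichlet--Robin problem; using $h\ge 0$ and testing the equation against $u|_{D_i}$ (the integration by parts of Section \ref{secIPP}) I get $\int_{D_i}|\nabla u|^2\le \mu_k(\Omega,h)\int_{D_i}u^2$. In a tubular neighbourhood of $\partial\Omega$ I would use $C^{1,1}$ coordinates to straighten the boundary and then reflect $u|_{D_i}$ across the flat portion; the reflected function lives on a doubled domain and satisfies an elliptic equation with Lipschitz coefficients whose principal symbol is as close to $\Delta$ as desired, provided the localization scale is small. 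The Robin contribution $hu$ appears as a boundary source, but the Lipschitz hypothesis on $h$ turns it into a lower-order perturbation that is controlled by a trace inequality. A localized Faber--Krahn applied to the reflected domain, together with a covering of a tubular neighbourhood of $\partial\Omega$ at the natural scale $\mu_k^{-1/2}$, should then bound the number of boundary nodal domains by $O(k^{(n-1)/n})=o(k)$, reflecting the fact that $\partial\Omega$ is $(n-1)$-dimensional.

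Combining the two estimates gives $\nu_k^R(\Omega,h)\le\gamma(n)\,k+o(k)$; dividing by $k$ and letting $k\to\infty$ finishes the proof. The hard part is precisely the $o(k)$ bound for boundary nodal domains: a crude reflection alone only yields an $O(k)$ bound with a worse Faber--Krahn constant, so one has to exploit both the $C^{1,1}$ regularity of $\partial\Omega$ (to reduce the reflected problem to a Lipschitz perturbation of the flat Laplacian) and the covering at the eigenvalue scale, with the integration-by-parts identity of Section \ref{secIPP} ensuring that the Robin term stays uniformly under control in terms of $\|h\|_{\mathrm{Lip}}$.
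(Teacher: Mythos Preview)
Your overall architecture (Weyl asymptotics, Faber--Krahn on each nodal domain, reflection near $\partial\Omega$) is the same as the paper's, and your use of $h\ge 0$ to obtain $\int_{D_i}|\nabla u|^2\le \mu_k\int_{D_i}u^2$ is exactly Proposition~\ref{propGreenRob}. The genuine gap is in the $o(k)$ count for boundary nodal domains.

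Your dichotomy ``$\overline{D_i}\subset\Omega$'' versus ``$\overline{D_i}\cap\partial\Omega\neq\emptyset$'' does not deliver that bound. A nodal domain that merely touches $\partial\Omega$ can have essentially all of its volume and all of the $L^2$-mass of $u$ deep inside $\Omega$. Reflection plus Faber--Krahn then only gives $|D_i|\gtrsim \mu_k^{-n/2}$ for the \emph{whole} domain, and summing yields $\sum|D_i|\le|\Omega|$, i.e.\ the ``crude $O(k)$ with a worse constant'' outcome you yourself flag. The covering of a tube at scale $\mu_k^{-1/2}$ does not help: nothing forces a boundary $D_i$ to contribute volume $\gtrsim\mu_k^{-n/2}$ \emph{inside} that tube, so you cannot pack them against the tubular volume. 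Separately, localizing with a cutoff supported in a tube of width $\delta$ introduces an error $\delta^{-2}$ in the Rayleigh quotient; with $\delta=\mu_k^{-1/2}$ this error is of the same order as $\mu_k$ and spoils the Faber--Krahn step.

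The paper resolves both issues by changing the dichotomy. With $\delta=\mu_k^{-\theta}$ for a fixed $\theta\in(0,\tfrac12)$ and a smooth cutoff pair $(\varphi_0^\delta,\varphi_1^\delta)$ as in Lemma~\ref{lemPartition}, a nodal domain is declared a \emph{bulk domain} if $\int_D(\varphi_0^\delta u)^2\ge(1-\varepsilon)\int_D u^2$ and a \emph{boundary domain} otherwise. For bulk domains one applies Faber--Krahn to $\varphi_0^\delta u\in H^1_0(D)$; the cutoff error is $O(\mu_k^{2\theta})=o(\mu_k)$, so the count is at most $\bigl(\tfrac{1+\varepsilon}{1-\varepsilon}\bigr)^{n/2}\gamma(n)\,k(1+o(1))$. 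For boundary domains, by definition a fixed fraction $\varepsilon$ of the mass sits in $\partial\Omega_{A\delta}^+$; after a further localization by the coordinate-chart cutoffs $\chi_i$, straightening, and reflection, Faber--Krahn now bounds $|D_j\cap\partial\Omega_{A\delta}^+|$ from below by $c(\varepsilon)\mu_k^{-n/2}$. Summing against the tubular volume $|\partial\Omega_{A\delta}^+|\sim\delta=\mu_k^{-\theta}$ gives $\nu_1\lesssim \mu_k^{n/2-\theta}=o(k)$. Letting $\varepsilon\to 0$ at the end recovers $\gamma(n)$. Note that the Robin term enters only through the sign in Proposition~\ref{propGreenRob}; no trace inequality or further control on $\|h\|_{\mathrm{Lip}}$ is needed in the counting.
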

Theorem \ref{thmPleijelNeu} is of course a special case of Theorem \ref{thmPleijelRob}, corresponding to $h=0$. However, in order to make the argument more readable, we prefer to treat first the Neumann case, and then outline the few changes to be made in order to prove Theorem  \ref{thmPleijelRob}.

Let us note that the same constant $\gamma(n)$ appears in the Dirichlet, Neumann, and Robin cases. It is known from the work of J. Bourgain \cite{Bourgain15Pleijel} and S. Steinerberger \cite{Steinerberger2014Pleijel}  that this constant is not optimal when $n=2$ in the Dirichlet case. See \cite{HelHof15Review} for an extensive discussion, in connection with minimal partition problems.  I.~Polterovich \cite{Pol09} conjectures that for a sufficiently regular open set $\Omega \subset \RR^2$, 
\begin{equation*}
	\limsup_{k\to +\infty}\frac{\nu_k^D(\Omega)}{k}\le \frac2{\pi}
\end{equation*}
and
\begin{equation*}
\limsup_{k\to +\infty}\frac{\nu_k^N(\Omega)}{k}\le \frac2{\pi}.
\end{equation*}
The constant $\frac2{\pi}$ is the smallest possible, as can be seen by considering rectangles \cite{Pol09,HelHof15Review,BonHel2015Chapter}.  Let us finally point out that  the analogue to Theorems \ref{thmPleijelDir}, \ref{thmPleijelNeu}, and \ref{thmPleijelRob}, with the same constant $\gamma(n)$, holds for the Schr\"odinger operator $-\Delta+V$ in $\RR^n$, for a class of radially symmetric potentials $V$. This was shown recently by P.~Charron \cite{Charron2015HO} for the harmonic oscillator and P.~Charron, B.~Helffer, and T.~Hoffmann-Ostenhof in a more general situation \cite{ChaHefHof2016}.

\subsection{Overview of the paper}
Let us now introduce the main ideas of the paper. The proof of Theorem \ref{thmPleijelNeu} is given in Section \ref{secProof}, and follows quite closely Pleijel's original argument \cite{Ple56}. This consisted in obtaining a control of $\nu_k^D(\Omega)$ in terms of $\lambda_k(\Omega)$ and $|\Omega|$, the volume of $\Omega$, by applying the Faber-Krahn inequality to each nodal domain of an eigenfunction $u$, associated with $\lambda_k(\Omega)$. The key fact at this point is the equality $\lambda_k(\Omega)=\lambda_1(D)$ where $D$ is a nodal domain. The upper bound in Theorem \ref{thmPleijelDir} then follows from Weyl's law. In the Neumann case, the same method cannot be applied to the nodal domains touching the boundary of $\Omega$, since the eigenfunctions do not satisfy a Dirichlet boundary condition there. The proof in \cite{Pol09} relied on the fact that the number of nodal domains touching the boundary is controlled by $\sqrt{\mu_k(\Omega)}$, under the hypotheses of Theorem \ref{thmPolterovich}. As far as we know, a similar estimate is not available in dimension higher than $2$, nor for a less regular boundary. To overcome this obstacle, we classify the nodal domains of the eigenfunction $u$ into two types: those for which the $L^2$-norm of $u$ is mostly concentrated inside $\Omega$ (\emph{bulk domains}), and those for which a significant proportion of the $L^2$-norm is concentrated near $\partial \Omega$ (\emph{boundary domains}). To control the number of boundary domains, we reflect them through $\partial \Omega$ before applying the Faber-Krahn inequality. To make this approach precise, we use some rather standard partition-of-unity arguments. Let us point out that P.~B\'erard and B.~Helffer propose a closely related strategy  for proving a version of Pleijel's theorem, on a manifold  with boundary $M$, in the Neumann case.  They suggest to consider the double manifold $\widehat M$, obtained by gluing two copies of $M$ along the boundary $\partial M$. They obtain  in this way a manifold which is symmetric with respect to $\partial M$. They can then identify the Neumann eigenfunctions on $M$ with the symmetric eigenfunctions of the Laplacian on $\widehat M$. The results in \cite{BerMey82} would then give
\[\lim_{k\to +\infty}\frac{\nu_k^N(M)}k\le 2\gamma(n),\]
with $2\gamma(n)<1$ for $n \ge 3$ (see \cite[Section 5]{HelPer16PAMS}). However, for this geometric approach to  work, one has to require that the metric on $\widehat M$ is sufficiently regular, so that the asymptotic isoperimetric inequality of \cite[II.15]{BerMey82} holds, and therefore also the asymptotic Faber-Krahn inequality of \cite[II.16]{BerMey82}. This would impose strong constraints on $\partial M$, for instance that $\partial M$ is totally geodesic, which are unnecessary in the approach of the present paper. 

For the sake of completeness, we give in Section \ref{secIPP} the proof of two technical results, which are used in Section \ref{secProof}. The first is a regularity result up to the boundary for Neumann eigenfunctions.
\begin{prop} \label{propReg} Let $\Omega \subset \RR^n$ be  an open, bounded, and connected set with a $C^{1,1}$ boundary. An eigenfunction $u$ of $-\Delta_{\Omega}^N$ belongs  to $C^{1,1^-}\left(\overline \Omega\right):=\bigcup_{\alpha \in (0,1)}C^{1,\alpha}\left(\overline \Omega\right)$. In particular, $u\in C^1\left(\overline \Omega\right)$. 
\end{prop}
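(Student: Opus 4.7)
The plan is a standard elliptic-regularity bootstrap, using $C^{1,1}$ regularity of $\partial\Omega$ as the precise threshold that permits $W^{2,p}$ estimates up to the boundary for the Neumann problem.

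First, I would recall that $u \in H^1(\Omega)$ is a weak solution of the Neumann problem
\begin{equation*}
-\Delta u = \mu u \text{ in } \Omega, \qquad \frac{\partial u}{\partial \nu} = 0 \text{ on } \partial\Omega,
\end{equation*}
where $\mu = \mu_k(\Omega)$. Applying the standard up-to-the-boundary $H^2$-regularity theorem for the Neumann Laplacian on a domain with $C^{1,1}$ boundary (the $C^{1,1}$ hypothesis is exactly what allows one to flatten the boundary by a bilipschitz map whose derivative is Lipschitz, hence the pull-back operator is again of divergence form with Lipschitz coefficients), we obtain $u \in H^2(\Omega)$ together with an estimate $\|u\|_{H^2} \le C(|\mu|\,\|u\|_{L^2} + \|u\|_{L^2})$.

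Next, I would bootstrap via the $L^p$-theory for the Neumann problem on $C^{1,1}$ domains: if $f \in L^p(\Omega)$ with $1 < p < \infty$ and $v \in H^1(\Omega)$ weakly solves $-\Delta v = f$ with zero Neumann data, then $v \in W^{2,p}(\Omega)$ with $\|v\|_{W^{2,p}} \le C_p(\|f\|_{L^p} + \|v\|_{L^p})$. Starting from $u \in H^2 \subset L^{q_0}$ (via Sobolev embedding for some $q_0 > 2$ depending on $n$), plugging $f = \mu u \in L^{q_0}$ yields $u \in W^{2,q_0}$, hence $u \in L^{q_1}$ for a larger exponent $q_1$, and iterating a finite number of times produces $u \in W^{2,p}(\Omega)$ for every finite $p$.

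Finally, I would invoke the Sobolev embedding $W^{2,p}(\Omega) \hookrightarrow C^{1,\alpha}(\overline\Omega)$, valid on Lipschitz (hence $C^{1,1}$) domains for $p > n$ with $\alpha = 1 - n/p$. Letting $p \to \infty$ reaches every $\alpha \in (0,1)$, whence $u \in \bigcup_{\alpha \in (0,1)} C^{1,\alpha}(\overline\Omega) = C^{1,1^-}(\overline\Omega)$, and in particular $u \in C^1(\overline\Omega)$.

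The only non-routine point is the choice of reference for the $L^p$-Neumann estimate on $C^{1,1}$ domains; once this is in hand, the argument is a textbook bootstrap. The $C^{1,1}$ hypothesis is sharp for the argument as stated, because both the $H^2$ step and the $W^{2,p}$ step require that straightening the boundary yields an operator in divergence form with Lipschitz coefficients.
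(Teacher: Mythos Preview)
Your proposal is correct and follows essentially the same approach as the paper: an $H^2$ step followed by an $L^p$ bootstrap and Sobolev embedding $W^{2,p}\hookrightarrow C^{1,1-n/p}$. The only cosmetic difference is that the paper rewrites the equation as $-\Delta u + u = (\mu+1)u$ and invokes the \emph{existence and uniqueness} of a $W^{2,p}$ solution to $-\Delta w + w = f$ with Neumann data (citing Grisvard), so that uniqueness of the weak solution immediately identifies $w=u$; you instead phrase the $L^p$ step as a pure regularity statement for $-\Delta v = f$ with zero Neumann data, which is equally valid.
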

The proof follows a remark from \cite[1.2.4]{Hen06} and uses the regularity results for elliptic boundary value problems contained in the classical monograph \cite{Grisvard1985}.

The second result, used repeatedly in Section \ref{secProof}, is the following Green's formula for Neumann eigenfunctions. 
\begin{prop}
	\label{propGreen}  Let $\Omega \subset \RR^n$ be  an open, bounded, and connected set with a $C^{1,1}$ boundary. If $u$ is an eigenfunction of $-\Delta_{\Omega}^N$ associated with the eigenvalue $\mu$, and if $D$ is a nodal domain of $u$, then
	\begin{equation}
	\label{eqGreen}
	\int_{D}\left|\nabla u\right|^2\,dx=\mu\int_{D}u^2\,dx.
	\end{equation}
\end{prop}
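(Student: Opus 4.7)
The plan is to invoke the variational formulation of the Neumann eigenvalue problem
\[\int_\Omega \nabla u \cdot \nabla \phi \, dx = \mu \int_\Omega u \phi \, dx, \quad \phi \in H^1(\Omega),\]
with the test function $v := u \cdot \mathbf{1}_D$, extended by $0$ outside $D$. Both sides then reduce to integrals over $D$, giving \eqref{eqGreen} at once. All the real work therefore consists in showing that $v$ is an admissible test function, i.e.\ $v \in H^1(\Omega)$.

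Up to replacing $u$ by $-u$, I may assume $u > 0$ on $D$. The first observation I would record is that $u$ must vanish on $\partial D \cap \Omega$: any point $x \in \partial D \cap \Omega$ cannot lie in another nodal domain, because such a domain is open and would therefore meet $D$; hence $x \in \mathcal N(u)$ and, by the continuity of $u$ granted by Proposition \ref{propReg}, $u(x) = 0$. Using in addition that $u \in C^1(\overline\Omega)$ is Lipschitz on $\overline\Omega$, I would then check that the zero extension $v$ of $u|_{\overline D}$ across $\partial D \cap \Omega$ is Lipschitz on $\overline\Omega$ by a straight-line argument: for $x \in D$ and $y \in \overline\Omega \setminus D$, the segment $[x,y]$ crosses $\partial D$ at some $z$, where $u(z)=0$, so $|v(x)-v(y)| = |u(x)-u(z)| \le \|\nabla u\|_\infty |x-z| \le \|\nabla u\|_\infty |x-y|$. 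This yields $v \in W^{1,\infty}(\Omega) \subset H^1(\Omega)$, with weak gradient $\mathbf{1}_D \nabla u$, which is exactly what is needed to plug $v$ into the weak formulation.

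The only real obstacle is this admissibility step when $D$ touches $\partial\Omega$: one cannot simply exhaust $D$ by smooth cutoffs compactly supported in $D$ and pass to the limit in an integration-by-parts formula on $D$, because the portion $\partial D \cap \partial\Omega$ of the nodal-domain boundary is not handled by such cutoffs. The $C^{1,1}$ hypothesis on $\partial\Omega$ enters the proof precisely here, through Proposition \ref{propReg}, which upgrades $u$ to a function that is Lipschitz up to the boundary; this is what makes the elementary segment argument above suffice, without any delicate trace analysis along $\partial D \cap \partial\Omega$.
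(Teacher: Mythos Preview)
Your approach is essentially correct and genuinely different from the paper's. The paper proves Proposition~\ref{propGreen} by approximating $D$ from inside by super-level sets $D_\alpha = \{x \in D : u(x) > \alpha\}$ at regular values $\alpha$; it uses the $C^1$ regularity of $u$ up to $\partial\Omega$ together with the Neumann condition to show that $\partial D_\alpha$ is Lipschitz (the level set $\{u=\alpha\}$ meets $\partial\Omega$ transversally because $\partial u/\partial\nu = 0$ there), applies the classical Green formula on $D_\alpha$, and passes to the limit $\alpha \to 0$ along a sequence of regular values supplied by Sard's theorem. Your route bypasses all of this: once $v = u\,\mathbf{1}_D \in H^1(\Omega)$ with $\nabla v = \mathbf{1}_D\,\nabla u$ is established, plugging $v$ into the weak formulation gives the identity in one line. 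This is shorter and more elementary. On the other hand, the paper's super-level-set machinery is not wasted effort: the same technique is reused in Section~\ref{subSecRef} for the reflection argument, where one genuinely needs the boundary of the approximating set to be a Lipschitz hypersurface meeting $\{y_n=0\}$ transversally.

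One small gap: your straight-line argument, as written, can fail when $\Omega$ is not convex. For $x\in D$ and $y\in\overline\Omega\setminus D$ the segment $[x,y]$ may leave $\overline\Omega$, and the first exit point $z$ from $D$ along $[x,y]$ may then lie on $\partial D\cap\partial\Omega$, where $u$ need not vanish (think of an annulus with a nodal line along a diameter, and $x,y$ chosen so that $[x,y]$ crosses the inner hole). The fix is immediate: run the segment argument inside any ball $B\subset\Omega$. For $x\in D\cap B$ and $y\in B\setminus D$, the segment stays in $B\subset\Omega$, so the first exit point $z$ lies in $\partial D\cap\Omega$ and hence $u(z)=0$. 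This shows $v$ is locally Lipschitz on $\Omega$ with uniform constant $\|\nabla u\|_{L^\infty(\Omega)}$, whence $v\in W^{1,\infty}(\Omega)\subset H^1(\Omega)$ with $\nabla v=\mathbf{1}_D\,\nabla u$ a.e., exactly as you need.
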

In dimension $2$, the nodal set of $u$ is the union of a finite number of $C^1$ curves, possibly crossing or hitting the boundary of $\Omega$ at a finite number of points, where they form equal angles (see for instance \cite[Section 2]{HelHofTer09}). In particular, this implies that $D$ is a Lipschitz domain (even piecewise-$C^1$). We can therefore apply Green's formula for bounded Lipschitz domains (see for instance \cite[Theorem 1.5.3.1]{Grisvard1985}),   
and obtain Equation \eqref{eqGreen} directly. In higher dimension, there exists as far as we know no proof that the nodal domains are Lipschitz (see however \cite{HardtEtAl1999Critical}). A way around this difficulty is indicated in \cite[Appendix D]{BerMey82}. The authors approximate a nodal domain by super-level sets of the eigenfunction for regular values. The boundary of these sets is regular enough to apply Green's formula, and Sard's theorem provides a sequence of regular values converging to $0$. We give a proof a Proposition \ref{propGreen} along similar lines. This method is also used in Section \ref{subSecRef} to carry out the reflection argument.

In Section \ref{secRobin}, we indicate the changes to be made in order to treat a Robin boundary condition. We give a precise formulation of the eigenvalue problem and prove Theorem \ref{thmPleijelRob}.

	\section{Proof of the main theorem}
	\label{secProof}
	
	\subsection{Preliminaries}
	
	For any $\delta>0$, we write
	\begin{equation*}
	\partial\Omega_{\delta}:=\{x\in \RR^n\,;\, \mbox{dist}(x,\partial\Omega)<\delta\}.
	\end{equation*}
	and
	\begin{equation*}
	\partial\Omega_{\delta}^+:=\{x\in \Omega\,;\, \mbox{dist}(x,\partial\Omega)<\delta\}.
	\end{equation*}
	
	Let us first note that, since the boundary of $\Omega$ is $C^{1,1}$, we can locally straighten it. More explicitly, there exists a finite covering $(U_i)_{1\le i\le N}$ of $\partial \Omega$ by coordinate charts. We mean by coordinate charts that, for $i\in\{1,\dots,N\}$, $U_i$ is an open set in $\RR^n$, and there exists an open ball $B(0,r_i)$ in $\RR^n$ and a $C^{1,1}$ diffeomorphism $\psi_i: B(0,r_i) \to U_i$, such that $\psi_i$ and $\psi_i^{-1}$ are bounded, with first order derivatives bounded and Lipschitz, and such that
	\begin{equation*}
	U_i\cap \Omega =\psi_i\left(B^+(0,r_i)\right)
	\end{equation*}  
	where
	\begin{equation*}
	B^+(0,r_i)=\{y=(y',y_n)\in \RR^{n-1}\times \RR,\,;\, y\in B(0,r_i)\mbox{ and }y_n>0\}.
	\end{equation*}
	There exists an associated family $(\chi_i)_{1\le i \le N}$ of $C^{1,1}$ non-negative functions and a positive constant $\delta_1$ such that
	\begin{enumerate}[i.]
		\item $\mbox{supp}(\chi_i)$ is compactly included in $U_i$ for $i\in \{1,\dots,N\}$;
		\item $\sum_{i=1}^N\chi_i^2\le 1$ in $\RR^n$ and $\sum_{i=1}^N\chi_i^2\equiv1$ in $\partial \Omega_{\delta_1}$.
	\end{enumerate}
	Let us note that $N$, $\delta_1$, and the family $(\chi_i)_{1\le i \le N}$ depend only on $\Omega$, and are fixed in the rest of this section.
	As a consequence of this local straightening of the boundary, we have the existence of partitions of unity adapted to our problem.
		\begin{lem}
			\label{lemPartition}
			There exist two positive constants $0<a<A$ such that, for all $0<\delta<\delta_1/A$, there exists two non-negative functions $\varphi_0^{\delta}$ and $\varphi_1^{\delta}$, $C^{1,1}$ in $\Omega$, satisfying
			\begin{enumerate}[i.]
				\item $(\varphi_0^{\delta})^2+(\varphi_1^{\delta})^2\equiv 1$ on $\Omega$;
				\item $\mbox{supp}(\varphi_0^{\delta})\subset \Omega\setminus \overline{ \partial\Omega_{a\delta}^+}$ and $\mbox{supp}(\varphi_1^{\delta})\subset \partial \Omega_{A\delta}^+$;
				\item $\left|\nabla \varphi_i^{\delta}(x)\right|\le \frac{C}{\delta}$ for $x\in \Omega$ and $i\in\{0,1\}$, with $C$ a constant independent of $\delta$.
			\end{enumerate}
		\end{lem}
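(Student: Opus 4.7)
The plan is to build $\varphi_0^\delta$ and $\varphi_1^\delta$ globally from a single scalar function, the distance to the boundary, rather than by patching local contributions from the charts $(U_i)$. Since $\partial\Omega$ is of class $C^{1,1}$, a standard regularity result yields $\delta_0>0$ such that $d(x):=\mbox{dist}(x,\partial\Omega)$ is of class $C^{1,1}$ on the one-sided tubular neighborhood $\partial\Omega_{\delta_0}^+$, with $|\nabla d|=1$ there. After possibly reducing $\delta_1$, I may assume $\delta_1\le \delta_0$, and I set $a:=1/2$ and $A:=3$ (so $0<a<A$).

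The key idea is to use a sine--cosine pair, so that the identity $(\varphi_0^\delta)^2+(\varphi_1^\delta)^2\equiv 1$ required in (i) becomes automatic. Fix once and for all a non-decreasing function $\rho\in C^\infty(\RR)$ with $\rho(t)=0$ for $t\le 1$ and $\rho(t)=\pi/2$ for $t\ge 2$. For each $0<\delta<\delta_1/A$, I define on $\partial\Omega_{2\delta}^+\subset \partial\Omega_{\delta_0}^+$
\begin{equation*}
\varphi_0^\delta(x):=\sin\bigl(\rho(d(x)/\delta)\bigr),\qquad \varphi_1^\delta(x):=\cos\bigl(\rho(d(x)/\delta)\bigr),
\end{equation*}
and extend $\varphi_0^\delta$ by $1$ and $\varphi_1^\delta$ by $0$ on the rest of $\Omega$. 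The properties of $\rho$ give $\varphi_0^\delta\equiv 0$ on $\{d\le\delta\}$ and $\varphi_1^\delta\equiv 0$ on $\{d\ge 2\delta\}$, from which (ii) follows; the strict inequalities $1/2<1$ and $2<3$ provide the slack needed once closures are taken.

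For (iii) and the $C^{1,1}$ regularity I would argue as follows. Where $d\ge 2\delta$ the functions are locally constant; where $d<\delta_0$ they are compositions of the smooth map $t\mapsto (\sin\rho(t),\cos\rho(t))$ with the $C^{1,1}$ function $d/\delta$. Since $\rho$ is identically $\pi/2$ on $[2,\infty)$, all its derivatives vanish at $t=2$, and so the two local definitions glue into a $C^{1,1}$ function on $\Omega$. The chain rule then yields
\begin{equation*}
|\nabla \varphi_i^\delta(x)| \le \frac{\|\rho'\|_\infty}{\delta}\,|\nabla d(x)| \le \frac{\|\rho'\|_\infty}{\delta},
\end{equation*}
which establishes (iii) with $C:=\|\rho'\|_\infty$.

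The only nontrivial analytic input is the $C^{1,1}$ regularity of $d$ in a tubular neighborhood of $\partial\Omega$, which is exactly why the $C^{1,1}$ assumption on the boundary is imposed; granted this, the remainder of the argument is a one-dimensional smoothing with matching derivatives at the gluing surface $\{d=2\delta\}$.
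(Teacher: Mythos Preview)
Your argument is correct and takes a genuinely different route from the paper's. The paper does not invoke the regularity of the distance function; instead it works chart by chart, using the local ``height'' coordinate $y_n$ in each $\psi_i$ as a surrogate for the distance. Concretely, the paper fixes $0<b<B$ with $by_n\le\mbox{dist}(\psi_i(y),\partial\Omega)\le By_n$, picks a smooth cutoff $g$ with $g\equiv1$ on $(-\infty,1/4]$ and $g\equiv0$ on $[3/4,+\infty)$, sets $f_i^\delta(x)=g(y_n/\delta)\chi_i(x)$ and $f^\delta=\sum_i (f_i^\delta)^2$, and then normalizes via $\varphi_1^\delta=f^\delta/\sqrt{(f^\delta)^2+(1-f^\delta)^2}$, $\varphi_0^\delta=(1-f^\delta)/\sqrt{(f^\delta)^2+(1-f^\delta)^2}$, obtaining $a=b/4$, $A=B$.

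Your construction is cleaner: the sine--cosine pair gives the identity $(\varphi_0^\delta)^2+(\varphi_1^\delta)^2\equiv1$ for free, and working with the single global function $d$ avoids the patching through $(\chi_i)$. The price is that you must import the (standard but nontrivial) fact that $d\in C^{1,1}(\partial\Omega_{\delta_0}^+)$ for $C^{1,1}$ boundaries; the paper's version only needs the charts, which are already part of the setup and are reused later in the decomposition $u_1^i=\chi_i u_1$. One cosmetic point: rather than ``reducing $\delta_1$'' (which is a constant fixed earlier in the paper), it is tidier to enlarge $A$, say $A:=\max(3,\,2\delta_1/\delta_0)$, so that $\delta<\delta_1/A$ automatically forces $2\delta<\delta_0$ and your formulas live where $d$ is $C^{1,1}$.
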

		
		\begin{proof}
			This construction is rather standard, and we merely give an outline of the demonstration. It is enough to find a $C^{1,1}$ function $f^{\delta}$ such that 
			\begin{enumerate}[a.]
				\item $0\le f^{\delta}\le 1$ on $\Omega$;
				\item $f^{\delta}\equiv 1$ on $\overline{\partial\Omega_{a\delta}^+}$ and $\mbox{supp}(f^{\delta})\subset \partial \Omega_{A\delta}^+$;				
				\item $\left|\nabla f^{\delta}(x)\right|\le \frac{K}{\delta}$ for $x\in \Omega$ and $i\in\{0,1\}$, with $K$ a constant independent of $\delta$.
			\end{enumerate}
				Indeed, we can then set
				\begin{equation*}
					\varphi_1^{\delta}(x)=\frac{f^{\delta}(x)}{\sqrt{f^{\delta}(x)^2+(1-f^{\delta}(x))^2}} \mbox{ and } \varphi_0^{\delta}(x)=\frac{1-f^{\delta}(x)}{\sqrt{f^{\delta}(x)^2+(1-f^{\delta}(x))^2}},
				\end{equation*}
				and we obtain functions satisfying Properties $i-iii$ of Lemma \ref{lemPartition}. In order to construct $f^{\delta}$, we fix $0<b<B$ such that for all $i\in\{1,\dots,N\}$ and all $y=(y',y_n)\in B^+(0,r_i)$, 
				\begin{equation*}
						by_n\le\mbox{dist}(\psi_i(y),\partial\Omega)\le By_n.
				\end{equation*}   
				We also fix a non-increasing smooth function $g:\RR\to\RR$ such that $g(t)=1$ for all $t\in (-\infty,1/4]$ and $g(t)=0$ for all $t\in[3/4,+\infty)$. We now define the function $f_i^{\delta}$ for each $i\in\{1,\dots,N\}$ by $f_i^{\delta}(x)=0$ if $x \notin U_i$ and
				\begin{equation*}
					f_i^{\delta}(x)=g\left(\frac{y_n}{\delta}\right)\chi_i(x)
				\end{equation*}
				for $x\in U_i$, with $y=(y',y_n)=\psi_i^{-1}(x)$. Then the function $f^{\delta}=\sum_{i=1}^N \left(f_i^{\delta}\right)^2$ satisfies Properties a-c with $a=b/4$ and $A=B$.			
		\end{proof}
		
	Let us now write $\mu=\mu_k(\Omega)$, with $k\ge 2$, and let $u$ be an associated eigenfunction with $\nu_k^N(\Omega)$ nodal domains. We set $\delta:=\mu^{-\theta}$, with $\theta$ a positive constant to be determined later. We write $u_0:=\varphi^{\delta}_0u$ and $u_1:=\varphi^{\delta}_1u$. According to property i of Lemma \ref{lemPartition}, we have, for every nodal domain $D$,
	\begin{equation*}
		\int_{D}u^2\,dx=\int_{D}u_0^2\,dx+\int_{D}u_1^2\,dx.
	\end{equation*}
	
	\subsection{First main step: two types of nodal domains}
	We fix $\eps>0$, and we distinguish between the \emph{bulk domains}, i.e. the domains $D$ satisfying
	\begin{equation*}
		\int_{D}u_0^2\,dx\ge (1-\eps)\int_{D}u^2\,dx,
	\end{equation*}
	and the \emph{boundary domains}, i.e. the domains $D$ satisfying
	\begin{equation*}
	\int_{D}u_1^2\,dx> \eps\int_{D}u^2\,dx,
	\end{equation*}
	 We denote by $\nu_0(\eps,\mu)$ the number of bulk domains, and the bulk domains themselves by $D_1^0,\dots,D^0_{\nu_0(\eps,\mu)}$. Similarly, the number of boundary domains and the boundary domains themselves are denoted by $\nu_1(\eps,\mu)$ and $D^1_1,\dots,D^1_{\nu_1(\eps,\mu)}$ respectively.
	 
	 \subsection{Bulk domains}
	 
	 We begin by giving an upper bound of the number of bulk domains $\nu_0(\eps,\mu)$. Let us fix $j\in\{1,\dots, \nu_0(\eps,\mu)\}$. According to the Faber-Krahn inequality (see for instance \cite[3.2]{Hen06}) and the variational characterization of $\lambda_1(D^0_j)$, we have 
	 \begin{equation}
		 \label{eqIneqFKBulk}
		 \lambda_1(\Bn)\left|D^0_j\right|^{-\frac2n}\le \frac{\int_{D^0_j}\left|\nabla u_0\right|^2\,dx}{\int_{D^0_j}u_0^2\,dx}\le \frac1{1-\eps}\frac{\int_{D^0_j}\left|\nabla u_0\right|^2\,dx}{\int_{D^0_j}u^2\,dx}.
	 \end{equation}
	 We have, in $\Omega$,
	 \begin{equation*}
		 \nabla u_0=\varphi^{\delta}_0\nabla u+ u \nabla \varphi_0^{\delta},
	 \end{equation*}
	 and therefore, according to Young's inequality,
	 \begin{equation*}
		 \left|\nabla u_0\right|^2\le (1+\eps)(\varphi^{\delta}_0)^2\left|\nabla u\right|^2+\left(1+\frac1{\eps}\right) \left|\nabla \varphi_0^{\delta}\right|^2 u^2.
	 \end{equation*}
	 Integrating over $D_j^0$, and using property i and iii of Lemma \ref{lemPartition}, we find
	 \begin{equation}
		 \label{eqIneqEnerBulk}
		 \int_{D^0_j}\left|\nabla u_0\right|^2\,dx\le (1+\eps)\int_{D^0_j}\left|\nabla u\right|^2\,dx+\left(1+\frac1{\eps}\right) \frac{C^2}{\delta^2}\int_{D^0_j} u^2\,dx.
	 \end{equation}
	 Injecting Inequality \eqref{eqIneqEnerBulk} into Inequality \eqref{eqIneqFKBulk}, we find
	 \begin{equation*}
		 \lambda_1(\Bn)\left|D^0_j\right|^{-\frac2n}\le \frac{1+\eps}{1-\eps}\frac{\int_{D^0_j}\left|\nabla u\right|^2\,dx}{\int_{D^0_j}u^2\,dx}+\frac{\left(1+\frac1{\eps}\right)C^2}{(1-\eps)\delta^2}
	 \end{equation*}
According to Proposition \ref{propGreen}, we have 
\begin{equation*}
	\int_{D^0_j}\left|\nabla u\right|^2\,dx=\mu \int_{D^0_j} u^2\,dx,
\end{equation*}
and therefore
	 \begin{equation*}
		 \lambda_1(\Bn)\left|D^0_j\right|^{-\frac2n}\le \frac{1+\eps}{1-\eps}\mu+\frac{1+\frac1{\eps}}{1-\eps}C^2\mu^{2\theta}.
	 \end{equation*}
	 We therefore obtain
	 \begin{equation*}
	 1\le\frac{\left|D^0_j\right|}{\lambda_1(\Bn)^{\frac{n}2}}\left(\frac{1+\eps}{1-\eps}\mu+\frac{1+\frac1{\eps}}{1-\eps}C^2\mu^{2\theta}\right)^{\frac{n}2}
	 \end{equation*}
	 and, summing over $j\in\{1,\dots, \nu_0(\eps,\mu)\}$, we get 
	 \begin{multline}
	 \label{eqIneqN0}
		 \nu_0(\eps,\mu)\le\frac1{\lambda_1(\Bn)^{\frac{n}2}}\left(\frac{1+\eps}{1-\eps}\mu+\frac{1+\frac1{\eps}}{1-\eps}C^2\mu^{2\theta}\right)^{\frac{n}2}\left|\bigcup_{j=1}^{\nu_0(\eps,\mu)}D^0_j\right|\\\le \frac1{\lambda_1(\Bn)^{\frac{n}2}}\left(\frac{1+\eps}{1-\eps}\mu+\frac{1+\frac1{\eps}}{1-\eps}C^2\mu^{2\theta}\right)^{\frac{n}2}\left|\Omega\right|.
	 \end{multline}
	 
	 \subsection{Boundary domains}
	 
	 Let us now give an upper bound of the number of boundary domains $\nu_1(\eps,\mu)$. We further decompose the function $u_1$ in the following way. Using the family $(\chi_i)_{1\le i \le N}$ introduced at the beginning of this section, we set $u_1^i:=\chi_i u_1$. Let us now fix $j\in \{1,\dots,\nu_1(\eps,\mu)\}$. According to Property ii for the family $(\chi_i)_{1\le i \le N}$ (here we use the assumption $\delta<\delta_1/A$), we have 
	 \begin{equation*}
		 \int_{D^1_j}u_1^2\,dx=\sum_{i=1}^N \int_{D^1_j}(u_1^i)^2\,dx.
	 \end{equation*}
	 As a consequence, there exists $i_j\in \{1,\dots,N\}$ such that 
	 \begin{equation}
	 \label{eqIneqBoundaryMass}
		 \int_{D^1_j}(u_1^{i_j})^2\,dx_\ge \frac1N \int_{D^1_j}u_1^2\,dx > \frac{\eps}{N}\int_{D^1_j}u^2\,dx.
	 \end{equation}
	 Let us note that $u^{i_j}_1=\widetilde{\chi}u$, with $\widetilde{\chi}=\chi_{i_j}\varphi_j^{\delta}$. The function $\widetilde \chi$ is $C^{1,1}$, $0\le \widetilde \chi \le 1$, $\mbox{supp}(\widetilde \chi)\subset \partial \Omega_{A\delta}$, and 
	 \begin{equation}
	 \label{eqIneqGradBound}
		 \left|\nabla \widetilde \chi \right| \le \frac{C'}{\delta},
	 \end{equation}
	 with $C'$ a constant depending only on $\Omega$. 
	 
	 Up to replacing $u$ by $-u$, we assume that $u_1^{i_j}$ is non-negative in $D^1_j$.	We define the open set 
	\begin{equation*}
		U:=\{x\in D^1_j\,;\, u_1^{i_j}(x)\neq 0\}	\subset \partial \Omega_{A\delta}^+\cap U_{i_j}.
	\end{equation*}
	We now straighten the boundary locally, that is to say we set $v:=u^{i_j}_1\circ \psi_{i_j}$ and $V:=\psi_{i_j}^{-1}(U)$. The set $V$ is open and contained in $B^+(0,r_{i_j})$. There exists positive constants $C''$ and $C'''$, depending only on $\Omega$, such that 
	\begin{equation}
	\label{eqIneqArea}
		|V|\le C''|U|
	\end{equation}
	and 
	\begin{equation}
		\label{eqIneqQuotient}
		\frac{\int_{U}\left|\nabla u_1^{i_j}\right|^2\,dx}{\int_{U}(u_1^{i_j})^2\,dx}
		\le C'''\frac{\int_{V}\left|\nabla v\right|^2\,dy}{\int_{V}v^2\,dy}.
	\end{equation}
	
     \subsection{Second main step: reflection of the boundary domains}	
		\label{subSecRef}
	
	 The basic idea consists in extending $V$ and $v$ by reflection through the hyperplane $\{y_n=0\}$. We denote by $\sigma$ the reflection
	\begin{equation}
	\label{eqRefSet}
		\begin{array}{cccc}
			\sigma:&\RR^n=\RR^{n-1}\times\RR&\to&\RR^n=\RR^{n-1}\times\RR\\
					& y=(y',y_n)&\mapsto&(y',-y_n).
		\end{array}
	\end{equation}
	Intuitively, we define $V^R:=\mbox{Int}\left(\overline{V\cup\sigma(V)}\right)$, and the function $v^R$ in $V^R$ by 
	\begin{equation}
	\label{eqRefFunct}
		v^R(y)=\left\{\begin{array}{lcl}
							v(y)&\mbox{ if }& y\in V;\\
							v(\sigma(y))&\mbox{ if }& y \in \sigma(V).
					\end{array}
		\right.
	\end{equation}
	We expect $\left|V^R\right|=2\left|V\right|$, and $v^R\in H^1_0\left(V^R\right)$. Indeed, if $\partial V$ is regular enough, $v$ satisfies a Dirichlet boundary condition on $\partial V\setminus \{y_n=0\}$, and therefore  $v^R$ satisfies a Dirichlet boundary condition on $\partial V^R$. We would then apply the Faber-Krahn inequality to the domain $V^R$ to obtain a lower bound of the Rayleigh quotient
	\begin{equation*}
		\frac{\int_{V}\left|\nabla v\right|^2\,dy}{\int_{V}v^2\,dy}.
	\end{equation*}
	
	The above reasoning is of course not valid in general, since we do not know if $\partial V$ is regular enough.  To overcome this difficulty, we follow a method used in \cite[Appendix D]{BerMey82} (we use the same method to prove Proposition \ref{propGreen}). Let us first note that Proposition \ref{propReg} implies that $v$ can be extended to a function $w\in C^1\left(\overline{D}\right)$. For $\alpha>0$ small enough, we consider the super-level sets
	\begin{equation*} 		
		V_{\alpha}:=\{y\in V\,;\, w(y)>\alpha\}
	\end{equation*}
	and
	\begin{equation*} 		
		W_{\alpha}:=\{y\in \overline V\,;\, w(y)>\alpha\},
	\end{equation*}
	and the functions
	\begin{equation*}
		v_{\alpha}:=(v-\alpha)_+=\max(v-\alpha,0)
	\end{equation*}
	and
	\begin{equation*}
		w_{\alpha}:=(w-\alpha)_+.
	\end{equation*}
	If $\alpha$ is a regular value for the function $v$, the set $\Sigma_{\alpha}:=\partial V_{\alpha}\setminus \{y_n=0\}$ is a $C^1$ submanifold with  boundary in $\RR^n$, the normal being given by  $\nabla v(y)$ for any $y\in \Sigma_{\alpha}$. The boundary of $\Sigma_{\alpha}$ is 
	\begin{equation*}
		\gamma_{\alpha}:=\left\{y=(y',y_n)\in\overline{V}\,;\, y_n=0\mbox{ and } w(y)=\alpha\right\}.
	\end{equation*}
	Let us impose the additional condition that $\alpha$ is a regular value for the function
		\begin{equation*}
		\begin{array}{cccc}
		w_{|\Gamma}:&\Gamma&\to&\RR\\
		&y'&\mapsto&w(y',0),
		\end{array}
		\end{equation*}
	where $\Gamma$ is the open set in $\RR^{n-1}$ defined by
	\begin{equation*}
		\Gamma=\{y'\in \RR^{n-1}\,;\,(y',0)\in \overline{V}\mbox{ and } w(y',0)>0\}.
	\end{equation*} 
	Then, for any $y\in \gamma_{\alpha}$, the component of $\nabla w(y)$ tangential to $\{y_n=0\}$ is non-zero. This implies that that $\Sigma_{\alpha}$ touches the hyperplane $\{y_n=0\}$ transversally.
	Let us now denote by $V_{\alpha}^R$ and $v_{\alpha}^R$ the reflection of  $V_{\alpha}$ and $v_{\alpha}$ through $\{y_n=0\}$, defined in the same way as $V^R$ and $v^R$ in Equations \eqref{eqRefSet} and \eqref{eqRefFunct}. By our choice of $\alpha$, $\left|V^R_{\alpha}\right|=2\left|V_{\alpha}\right|$ and the function $v_{\alpha}^R$ belongs to $H^1_0\left(V^R_{\alpha}\right)$. 
	The Faber-Krahn inequality, applied to the open set $V^R_{\alpha}$, gives us 
	\begin{equation}
	\label{eqIneqRef}
		\lambda_1(\Bn)2^{-\frac2n}\left|V_{\alpha}\right|^{-\frac2n}=\lambda_1(\Bn)\left|V^R_{\alpha}\right|^{-\frac2n}\le \frac{\int_{V^R_{\alpha}}\left|\nabla v^R_{\alpha}\right|^2\,dy}{\int_{V^R_{\alpha}}(v^R_{\alpha})^2\,dy}=\frac{\int_{V_{\alpha}}\left|\nabla v_{\alpha}\right|^2\,dy}{\int_{V_{\alpha}}v_{\alpha}^2\,dy}.
	\end{equation}
	According to Sard's theorem, applied to the functions $v:D\to \RR$ and $w_{|\Gamma}:\Gamma\to\RR$, we can find a sequence $(\alpha_m)_{m\ge1}$ of positive  regular values for both functions satisfying $\alpha_m\to 0$. Using Inequality \eqref{eqIneqRef} for $\alpha=\alpha_m$ and passing to the limit, we find
	\begin{equation*}
		\lambda_1(\Bn)2^{-\frac2n}\left|V \right|^{-\frac2n}\le \frac{\int_{V}\left|\nabla v\right|^2\,dy}{\int_{V}v^2\,dy}.
	\end{equation*}
	Using Inequalities \eqref{eqIneqArea}, \eqref{eqIneqQuotient}, and \eqref{eqIneqBoundaryMass}, we get
	\begin{multline}
		\label{eqIneqFKBoundary}
		\lambda_1(\Bn)\left|D^1_j\cap \partial \Omega_{A\delta}^+\right|^{-\frac2n}\le\lambda_1(\Bn)\left|U\right|^{-\frac2n} \le \left(2C''\right)^{\frac2n}C'''\frac{\int_{U}\left|\nabla u_1^{i_j}\right|^2\,dx}{\int_{U}(u_1^{i_j})^2\,dx} \\ \le  \frac{N}{\eps}\left(2C''\right)^{\frac2n}C'''\frac{\int_{U}\left|\nabla u_1^{i_j}\right|^2\,dx}{\int_{D^1_j}u^2\,dx}.
	\end{multline}
	A computation similar to the one done for $u_0$, combined with Inequality \eqref{eqIneqGradBound}, gives us
	\begin{equation}
	\label{eqIneqEnerBoundary}
	\int_{U}\left|\nabla u_1^{i_j}\right|^2\,dx\le 2\int_{D^1_j}\left|\nabla u\right|^2\,dx+2\frac{(C')^2}{\delta^2}\int_{D^1_j} u^2\,dx.
	\end{equation}
	Combining Inequalities \eqref{eqIneqFKBoundary} and \eqref{eqIneqEnerBoundary}, as in the case of $u_0$, we obtain
	\begin{equation*}
		1\le \widetilde{C}|D_j^1\cap\partial \Omega_{A\delta}^+|\eps^{-\frac{n}2}\left(\mu+(C')^2 \mu^{2\theta}\right)^{\frac{n}2},
	\end{equation*}
	with $\widetilde C$ a constant depending only on $\Omega$. Summing over $j\in\{1,\dots,\nu_1(\eps,\mu)\}$, we get 
	\begin{equation*}
	\nu_1(\eps,\mu)\le \widetilde{C}|\partial \Omega_{A\delta}^+|\eps^{-\frac{n}2}\left(\mu+(C')^2 \mu^{2\theta}\right)^{\frac{n}2}.
	\end{equation*}
	Since $|\partial \Omega_{\delta}^+|\sim\mathcal{H}^{n-1}\left(\partial \Omega\right)A\delta$ as $\delta\to 0$, we obtain finally
	\begin{equation}
	\label{eqIneqN1}
	\nu_1(\eps,\mu)\le \widetilde{C}'\mu^{-\theta}\eps^{-\frac{n}2}\left(\mu+(C')^2\mu^{2\theta}\right)^{\frac{n}2},
	\end{equation}
	with $\widetilde C'$ a constant depending only on $\Omega$.
	
	\subsection{End of the proof}
	
	We now fix $\theta \in (0,1/2)$, for instance $\theta=1/4$, and consider the limits when  $k\to +\infty$, keeping $\eps$ fixed (we recall that $\mu=\mu_k(\Omega)$).
	We have 
	\begin{equation*}
	 \limsup_{k\to +\infty}\frac{\nu_k^N(\Omega)}{k} \le \limsup_{k\to+\infty}\frac{\nu_0(\eps,\mu_k(\Omega))}{k}+\limsup_{k\to+\infty}\frac{\nu_1(\eps,\mu_k(\Omega))}{k}.
	\end{equation*}
	Using Inequality \eqref{eqIneqN0}, we get 
	\begin{equation*}
		\limsup_{k\to+\infty}\frac{\nu_0(\eps,\mu_k(\Omega))}{k}\le \limsup_{k\to+\infty} \frac1{k\lambda_1(\Bn)^{\frac{n}2}}\left(\frac{1+\eps}{1-\eps}\mu_k(\Omega)+\frac{1+\frac1{\eps}}{1-\eps}C^2\mu_k(\Omega)^{2\theta}\right)^{\frac{n}2}\left|\Omega\right|.
	\end{equation*}
	We recall that  $\mu_k(\Omega)\le \lambda_k(\Omega)$ for all positive integer $k$ \cite[XIII.15, Proposition 4]{ReeSim78}. According to Weyl's law \cite[Theorem XIII.78]{ReeSim78},
	\begin{equation*}
		\lim_{k\to +\infty}\frac{\lambda_k(\Omega)^{\frac{n}2}|\Omega|}{k}=\frac{(2\pi)^n}{\omega_n}.
	\end{equation*}
	We therefore have the asymptotic upper bound
	\begin{equation}
	\label{eqIneqWeyl}
		\limsup_{k\to +\infty}\frac{\mu_k(\Omega)^{\frac{n}2}|\Omega|}{k}\le\frac{(2\pi)^n}{\omega_n}.
	\end{equation}
	This gives us
	\begin{equation*}
	\limsup_{k\to+\infty}\frac{\nu_0(\eps,\mu_k(\Omega))}{k}\le \left(\frac{1+\eps}{1-\eps}\right)^{\frac{n}2} \frac{(2\pi)^n}{\lambda_1(\Bn)^{\frac{n}2}\omega_n}=\left(\frac{1+\eps}{1-\eps}\right)^{\frac{n}2}\gamma(n).
	\end{equation*}
	On the other hand, Inequality \eqref{eqIneqN1} implies
	\begin{equation*}
	\limsup_{k\to+\infty}\frac{\nu_1(\eps,\mu_k(\Omega))}{k}\le \limsup_{k\to+\infty} \frac{\widetilde{C}'}{k\mu_k(\Omega)^{\theta}}\left(\frac{2}{\eps}\mu_k(\Omega)+\frac{2(C')^2}{\eps}\mu_k(\Omega)^{2\theta}\right)^{\frac{n}2},
	\end{equation*}
	and therefore, according to Inequality \eqref{eqIneqWeyl},
	\begin{equation*}
		\limsup_{k\to+\infty}\frac{\nu_1(\eps,\mu_k(\Omega))}{k}=0.
	\end{equation*}
	We obtain 
	\begin{equation*}
		\limsup_{k\to +\infty}\frac{\nu_k^N(\Omega)}{k} \le \left(\frac{1+\eps}{1-\eps}\right)^{\frac{n}2}\gamma(n).
	\end{equation*}
	Letting $\eps$ tend to $0$, we get finally
	\begin{equation*}
	\limsup_{k\to +\infty}\frac{\nu_k^N(\Omega)}{k} \le \gamma(n).
	\end{equation*}

\section{Proof of the auxiliary results}
\label{secIPP}

\subsection{Proof of Proposition \ref{propReg}}
 We use the following regularity result, which can be found for instance in \cite[2.1]{Grisvard1985}.
\begin{lem}
	\label{lemRegLp}
	If $\Omega$ is a bounded open set with a $C^{1,1}$ boundary and $f\in L^p\left(\Omega \right)$ with $p\in(1,\infty)$, there exists a unique $w\in W^{2,p}(\Omega)$ which solves
	\begin{equation*}
		\left\{\begin{array}{rcl}	
			-\Delta w+ w&=&f\mbox{ in } \Omega;\\
			 \frac{\partial w}{\partial \nu}&=&0 \mbox{ on } \partial \Omega.
			 \end{array}
			\right.
	\end{equation*}
\end{lem}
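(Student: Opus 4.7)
The plan is to obtain existence, uniqueness, and the requested $W^{2,p}$ regularity by combining the variational framework on $H^1(\Omega)$ with elliptic $L^p$ regularity up to a $C^{1,1}$ boundary.

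First, I would establish existence and uniqueness of a weak solution. The bilinear form $a(u,v):=\int_\Omega(\nabla u\cdot\nabla v+uv)\,dx$ is continuous and coercive on $H^1(\Omega)$, since $a(u,u)=\|u\|_{H^1}^2$. For $f\in L^2(\Omega)$, the map $v\mapsto\int_\Omega fv\,dx$ is a bounded linear functional on $H^1(\Omega)$, so the Lax--Milgram theorem yields a unique $w\in H^1(\Omega)$ satisfying $a(w,v)=\int_\Omega fv\,dx$ for every $v\in H^1(\Omega)$. Taking $v\in C^\infty_c(\Omega)$ shows that $-\Delta w+w=f$ holds in the distributional sense; once $w$ is proved to be in $W^{2,p}$, integration by parts against arbitrary $v\in H^1(\Omega)$ recovers the Neumann condition $\partial w/\partial\nu=0$ as an identity in the trace space. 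Uniqueness is immediate: a solution corresponding to $f=0$ satisfies $0=a(w,w)=\|w\|_{H^1}^2$.

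Second, I would upgrade this weak solution to $W^{2,p}(\Omega)$ via local estimates. Inside $\Omega$, standard interior Calder\'on--Zygmund estimates for $-\Delta+1$ on small balls give $w\in W^{2,p}_{\mathrm{loc}}(\Omega)$ together with the expected bound. At the boundary, I would use the $C^{1,1}$ charts $\psi_i$ introduced earlier in the paper to straighten $\partial\Omega$ on each $U_i$. In local coordinates the equation becomes $L\tilde w=\tilde f$ on $B^+(0,r_i)$, with $L$ a second-order elliptic operator whose leading coefficients are Lipschitz (hence continuous, bounded) and whose lower-order coefficients lie in $L^\infty$; the Neumann boundary condition transforms to a conormal condition $\sum_j b_j\partial_j\tilde w=0$ on $\{y_n=0\}$ with Lipschitz $b_j$. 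After a linear change of variables one may assume that the boundary operator agrees with $\partial_{y_n}$ at the origin, so that freezing the principal coefficients at $0$ reduces the required estimate to the pure half-space Neumann problem for $-\Delta+1$. The latter reduces to an interior problem on the full ball via even reflection across $\{y_n=0\}$ (legitimate precisely because the Neumann trace vanishes), where the full-space $L^p$ Calder\'on--Zygmund estimate applies. A standard perturbation-plus-covering argument then absorbs the small Lipschitz perturbation of the leading coefficients.

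Third, I would cover the full range $p\in(1,\infty)$. For $p\geq 2$, the continuous embedding $L^p(\Omega)\hookrightarrow L^2(\Omega)$ (valid since $\Omega$ is bounded) provides a weak solution in $H^1(\Omega)$ from step one, and the regularity step above promotes it to $W^{2,p}$ with $\|w\|_{W^{2,p}}\le C\|f\|_{L^p}$. For $1<p<2$, I would approximate $f\in L^p(\Omega)$ by a sequence $(f_m)\subset L^2(\Omega)$ (for instance truncations), obtain the corresponding solutions $w_m\in W^{2,2}(\Omega)$, and observe that the a priori $W^{2,p}$ estimate (valid for every $p\in(1,\infty)$ by the same localized reflection argument, since the underlying Calder\'on--Zygmund theory works in all such $L^p$) makes $(w_m)$ a Cauchy sequence in $W^{2,p}(\Omega)$; its limit $w$ solves the problem for $f$.

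The main obstacle is carrying out the $W^{2,p}$ boundary estimate under only $C^{1,1}$ regularity of $\partial\Omega$: the flattening diffeomorphism is merely $C^{1,1}$, so after transformation the leading coefficients of $L$ are only Lipschitz, and the classical smooth-boundary Agmon--Douglis--Nirenberg/Schauder statements cannot be quoted verbatim. One must rely instead on the strong-solution Calder\'on--Zygmund theory for elliptic operators with continuous principal part, combined with the reflection trick that converts the Neumann problem into a problem on a full ball; this is precisely the content of the relevant sections of Grisvard's monograph cited in the paper.
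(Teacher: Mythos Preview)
Your outline is essentially correct and follows the standard strategy (Lax--Milgram for existence/uniqueness, interior Calder\'on--Zygmund, boundary straightening plus reflection/perturbation for the $W^{2,p}$ estimate, approximation for $1<p<2$). However, the paper does not prove this lemma at all: it simply states the result and cites \cite[2.1]{Grisvard1985} as a reference. What you have written is, in effect, a sketch of the proof given in Grisvard's monograph, so your approach and the paper's are not really comparable---you are supplying content the paper deliberately outsources.

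One small remark: in your boundary step you say the Neumann condition transforms to a conormal condition and that ``after a linear change of variables one may assume that the boundary operator agrees with $\partial_{y_n}$ at the origin.'' This is fine, but be aware that the even-reflection trick works cleanly only for the exact normal derivative on the flat boundary; when the conormal coefficients are merely Lipschitz, the reflected problem acquires lower-order terms with $L^\infty$ coefficients, which you must absorb into the perturbation argument. You allude to this, but it is the one place where a reader might want more detail.
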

We now follow the method indicated in \cite[Remark 1.2.11]{Hen06}. Let us consider $u$, an eigenfunction of $-\Delta_{\Omega}^N$ associated with $\mu$. The function $u$ is in $H^1(\Omega)$, and is the unique weak solution of the boundary value problem 
\begin{equation*}
\left\{\begin{array}{rcl}	
-\Delta u+ u&=&f\mbox{ in } \Omega;\\
\frac{\partial u}{\partial \nu}&=&0 \mbox{ on } \partial \Omega;
\end{array}
\right.
\end{equation*}
with $f=(\mu+1)u$.
We know by Lemma \ref{lemRegLp} that this system has a solution $w\in H^2(\Omega)$. By uniqueness of the weak solution, $w=u$. We therefore obtain $u\in H^2(\Omega)$.

If $n\le 4$, the Sobolev embedding theorem tells us that for any $p\in [1,\infty[$, $u\in L^p(\Omega)$, and another application of Lemma \ref{lemRegLp} gives us $u\in W^{2,p}(\Omega)$. If $n>4$, we still obtain $u\in L^{p}(\Omega)$, and therefore $u\in W^{2,p}(\Omega)$, for all  $p\in [1,\infty[$, after a standard bootstrap argument, using repeatedly Lemma \ref{lemRegLp} and the Sobolev embedding theorem. Since $W^{2,p}(\Omega) \subset C^{1,1-\frac{n}p}\left(\overline \Omega \right)$ for all $p>n$, we have proved Proposition \ref{propReg}.
\subsection{Proof of Proposition \ref{propGreen}}
\label{subsecPropGreen}

We use the method of \cite[Appendix D]{BerMey82}. We consider an eigenfunction $u$ of $-\Delta_{\Omega}^N$ associated with $\mu$, and a nodal domain $D$ of $u$. Up to replacing $u$ by $-u$, we assume that $u$ is positive in $D$. Since $u\in C^{1}\left(\overline \Omega\right)$, there exists an open neighborhood $\mathcal O$ of $\overline \Omega$ in $\RR^n$ and a $C^{1}$ function $g: \mathcal O\to \RR$ such that $g=u$ in $\Omega$. We denote by $E$ the nodal domain of $g$ containing $D$. For $\alpha>0$ small enough, we write 
\begin{equation*}
	D_{\alpha}:=\{x\in D\,;\,u(x)>\alpha\}
\end{equation*}
and
\begin{equation*}
	E_{\alpha}:=\{x\in E\,;\,g(x)>\alpha\}.
\end{equation*}
Let us note that 
\begin{equation*}
	\partial E_{\alpha}\cap \mathcal O=\{x \in E\,;\, g(x)=\alpha\}.
\end{equation*}
We have the following decomposition of $\partial D_{\alpha}\subset \overline \Omega$ into disjoint subsets:
\begin{equation}
\label{eqDecompBound}
	\partial D_{\alpha}=\Sigma_{\alpha}\cup \Gamma_{\alpha}\cup \gamma_{\alpha},
\end{equation}
where
\begin{equation*}
	\Sigma_{\alpha}:=\partial E_{\alpha}\cap \Omega
\end{equation*}
is a closed set in $\Omega$,
\begin{equation*}
	\Gamma_{\alpha}:= E_{\alpha} \cap \partial \Omega
\end{equation*}
is an open set in $\partial \Omega$, and
\begin{equation*}
	\gamma_{\alpha}:= \partial E_{\alpha} \cap \partial \Omega
\end{equation*}
is a closed set in $\partial \Omega$.

We now assume that $\alpha$ is a regular value for the function $g$. Then $\partial E_{\alpha}$ is a $C^1$-regular surface, and for each $x \in \partial E_{\alpha}$, $\nabla g(x)$ is orthogonal to $\partial E_{\alpha}$ at $x$. Since $u$ satisfies a Neumann boundary condition on $\partial \Omega$, we have $\nu(x)\cdot \nabla g(x)=0$ for any $x\in \partial \Omega$, where $\nu(x)$ is the exterior normal unit vector to $\partial \Omega$ at $x$. This implies that the two $C^1$-submanifolds $\partial E_{\alpha}$ and $\partial \Omega$ intersect transversally, and therefore that $\gamma_{\alpha}$ is a $C^1$-submanifold of $\partial \Omega$, with dimension $n-2$. From this and the decomposition \eqref{eqDecompBound}, we conclude that $\partial D_{\alpha}$ is Lipschitz. We can therefore apply Green's formula to the function $u_{\alpha}:=u-\alpha$ in $D_{\alpha}$ (see \cite[Theorem 1.5.3.1]{Grisvard1985}), and we obtain
\begin{equation*}
\int_{D_{\alpha}} \left(-\Delta u_{\alpha}\right)u_{\alpha}\,dx=-\int_{\partial D_{\alpha}}u_{\alpha}\frac{\partial u_{\alpha}}{\partial \nu}\,d\sigma+\int_{D_{\alpha}}\left|\nabla u_{\alpha}\right|^2\,dx,
\end{equation*}  
and thus
\begin{equation*}
	\mu\int_{D_{\alpha}} u^2\,dx-\alpha\mu\int_{D_{\alpha}}u\,dx=-\int_{\Sigma_{\alpha}}u_{\alpha}\frac{\partial u_{\alpha}}{\partial \nu}\,d\sigma-\int_{\Gamma_{\alpha}}u_{\alpha}\frac{\partial u_{\alpha}}{\partial \nu}\,d\sigma+\int_{D_{\alpha}}\left|\nabla u\right|^2\,dx.
\end{equation*}
We have $u_{\alpha}=0$ on $\Sigma_{\alpha}$ and $\frac{\partial u_{\alpha}}{\partial \nu}=0$ on $\Gamma_{\alpha}$, and therefore
\begin{equation*}
	\mu\int_{D_{\alpha}} u^2\,dx-\alpha\mu\int_{D_{\alpha}}u\,dx=\int_{D_{\alpha}}\left|\nabla u\right|^2\,dx.
\end{equation*}
According to Sard's theorem, there exists a sequence $(\alpha_m)_{m\ge 1}$ of positive regular values for the function $g$, converging to $0$. For any $m$ large enough, we have
\begin{equation*}
\mu\int_{D_{\alpha_m}} u^2\,dx-\alpha_m\mu\int_{D_{\alpha_m}}u\,dx=\int_{D_{\alpha_m}}\left|\nabla u\right|^2\,dx.
\end{equation*}
Letting $m\to +\infty$, we get
\begin{equation*}
\mu\int_{D} u^2\,dx=\int_{D}\left|\nabla u\right|^2\,dx,
\end{equation*}
which concludes the proof of Proposition \ref{propGreen}.

\section{The Robin boundary condition}
\label{secRobin}

Let us begin this Section with a definition of the operator $-\Delta_{\Omega}^{R,h}$. We follow the method of \cite[3.1]{Rozenblum1994}, although this reference uses slightly stronger regularity assumption on the domain. We define the real bilinear form $q_h$ on the domain $H^1(\Omega)$ by
\begin{equation*}
	q_h(u,v)=\int_{\Omega}\nabla u\cdot\nabla v\,dx+\int_{\partial\Omega} uv\,d\sigma
\end{equation*}
for all $u$ and $v$ in $H^1(\Omega)$. The form $q_h$ is closed, symmetric, and non-negative.  We define the self-adjoint operator $-\Delta_{\Omega}^{R,h}$ as the Friedrichs extension of $q_h$ \cite[Theorem X.23]{ReeSim75}. The compact embedding $H^1(\Omega)\subset L^2(\Omega)$ ensures that $-\Delta_{\Omega}^{R,h}$ has compact resolvent. The spectrum of $-\Delta_{\Omega}^{R,h}$ therefore consists in a sequence of isolated non-negative eigenvalues of finite multiplicity tending to $+\infty$, which we denote by $(\mu_{k}(\Omega,h))_{k\ge 1}$ (with repetition according to the multiplicities). Let us point out that for all positive integer $k$, $\mu_k(\Omega,h)\le \lambda_k(\Omega)$. Indeed,
\begin{equation*}
	q_h(u,v)=\int_{\Omega}\nabla u\cdot\nabla v\,dx
\end{equation*}
for all $u$ and $v$ in $H^1_0(\Omega)$, so that the inequality follows from the minmax characterization of $\mu_k(\Omega,h)$ and $\lambda_k(\Omega)$ \cite[Theorem XIII.2]{ReeSim78}. Weyl's law for the sequence $(\lambda_k(\Omega))_{k\ge 1}$ then implies
\begin{equation}
\label{eqIneqWeylRob}
	\limsup_{k \to +\infty}\frac{\mu_k(\Omega,h)^{\frac{n}2}|\Omega|}{k}\le \frac{(2\pi)^n}{\omega_n}.
\end{equation}

Proposition \ref{propReg} can be generalized in the following way.
\begin{prop} \label{propRegRob} Let $\Omega \subset \RR^n$ be  an open, bounded, and connected set with a $C^{1,1}$ boundary, and $h$ be a Lipschitz function in $\overline \Omega$ with $h \ge 0$ on $\partial \Omega$. An eigenfunction $u$ of $-\Delta_{\Omega}^{R,h}$ belongs  to $C^{1,1^-}\left(\overline \Omega\right):=\bigcup_{\alpha \in (0,1)}C^{1,\alpha}\left(\overline \Omega\right)$. In particular, $u\in C^1\left(\overline \Omega\right)$. 
\end{prop}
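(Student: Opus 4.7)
The plan is to parallel the proof of Proposition \ref{propReg}, replacing Lemma \ref{lemRegLp} by a Robin analog: for $f \in L^p(\Omega)$ with $p \in (1, \infty)$, the boundary value problem
\begin{equation*}
-\Delta w + w = f \text{ in } \Omega, \qquad \frac{\partial w}{\partial \nu} + h w = 0 \text{ on } \partial \Omega
\end{equation*}
admits a unique solution $w \in W^{2,p}(\Omega)$. Under the assumptions that $\partial \Omega$ is $C^{1,1}$ and $h$ is Lipschitz in $\overline \Omega$ with $h \ge 0$ on $\partial \Omega$, this follows from the elliptic regularity results in \cite{Grisvard1985}, the Robin condition being an oblique-derivative condition with lower-order coefficient of the required regularity; existence and uniqueness follow from the coercivity of $q_h$ established in Section \ref{secRobin}.

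Granting this, the argument follows exactly the structure of the proof of Proposition \ref{propReg}. Let $u$ be an eigenfunction of $-\Delta_{\Omega}^{R,h}$ associated with $\mu$. By construction via the form $q_h$, $u$ lies in $H^1(\Omega)$ and is the unique weak solution of the boundary value problem above with $f = (\mu + 1) u \in L^2(\Omega)$. The Robin analog of Lemma \ref{lemRegLp} then gives $u \in H^2(\Omega)$. One iterates in the standard way: if $n \le 4$, a single application of the Sobolev embedding theorem yields $u \in L^p(\Omega)$ for all finite $p$, so a second application of the regularity result gives $u \in W^{2,p}(\Omega)$; if $n > 4$, a standard bootstrap using Sobolev embedding and the regularity result alternately yields the same conclusion. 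Choosing $p > n$, the embedding $W^{2,p}(\Omega) \subset C^{1, 1 - n/p}(\overline \Omega)$ and the union over such $p$ deliver $u \in C^{1,1^-}(\overline \Omega)$.

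The hard part is the Robin regularity statement itself, since $h$ is only assumed Lipschitz and the reference \cite{Rozenblum1994} quoted at the beginning of Section \ref{secRobin} uses stronger hypotheses on the domain. An alternative route, avoiding any Robin-specific theorem, is to treat the boundary condition as inhomogeneous Neumann data $g := -h u\rvert_{\partial \Omega}$ and to bootstrap on both sides simultaneously: starting from $u \in H^1(\Omega)$, the Lipschitz regularity of $h$ gives $h u \in H^1(\Omega)$, whence $g \in H^{1/2}(\partial \Omega)$; the inhomogeneous Neumann regularity result from \cite{Grisvard1985} then gives $u \in H^2(\Omega)$, which improves the regularity of $g$ on $\partial \Omega$, and the alternation proceeds in parallel with the interior bootstrap. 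Either route yields Proposition \ref{propRegRob}.
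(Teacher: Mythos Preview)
Your proposal is correct and follows essentially the same route as the paper: state a Robin analogue of Lemma~\ref{lemRegLp} (the paper records it as Lemma~\ref{lemRegLpRob}, a special case of \cite[Theorem 2.4.2.7]{Grisvard1985}) and then rerun the bootstrap from the proof of Proposition~\ref{propReg} verbatim. Your concern about the ``hard part'' is unnecessary---Grisvard's theorem applies directly under the $C^{1,1}$ and Lipschitz-$h$ hypotheses---so the alternative inhomogeneous-Neumann bootstrap you sketch is not needed, though it is a valid workaround.
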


To prove Proposition \ref{propRegRob}, we use the following regularity result, which is a special case of \cite[Theorem 2.4.2.7]{Grisvard1985}.

\begin{lem}
	\label{lemRegLpRob}
	Let $\Omega \subset \RR^n$ be  an open, bounded, and connected set with a $C^{1,1}$ boundary, and let $h$ be a Lipschitz function in $\overline \Omega$ with $h \ge 0$ on $\partial \Omega$, and $f\in L^p\left(\Omega \right)$ with $p\in(1,\infty)$. There exists a unique $w\in W^{2,p}(\Omega)$ which solves
	\begin{equation*}
	\left\{\begin{array}{rcl}	
	-\Delta w+ w&=&f\mbox{ in } \Omega;\\
	\frac{\partial w}{\partial \nu}+hw&=&0 \mbox{ on } \partial \Omega.
	\end{array}
	\right.
	\end{equation*}
\end{lem}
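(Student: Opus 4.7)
The plan is to apply Theorem 2.4.2.7 of \cite{Grisvard1985} directly, as indicated in the statement, and to verify its hypotheses in our setting. That theorem establishes existence and uniqueness in $W^{2,p}(\Omega)$ for strongly elliptic second-order boundary value problems with an oblique derivative condition, under suitable regularity assumptions on the domain and coefficients and under a coercivity condition. Our operator $-\Delta + \mathrm{Id}$ is uniformly elliptic with smooth (constant) coefficients, so the interior hypotheses hold trivially. On the boundary, the condition $\partial w/\partial\nu + h w = 0$ is a genuine oblique derivative with true normal component (hence non-characteristic); since $\partial\Omega$ is $C^{1,1}$ the outward unit normal $\nu$ is Lipschitz on $\partial\Omega$, and by assumption $h$ is Lipschitz on $\overline{\Omega}$, so the coefficients of the boundary operator satisfy the regularity required by Grisvard's theorem.

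For uniqueness I would use the variational framework already introduced in Section~\ref{secRobin}. Consider the closed, symmetric bilinear form
\begin{equation*}
a(u,v) = \int_{\Omega} \left(\nabla u \cdot \nabla v + uv\right)\,dx + \int_{\partial \Omega} h\, uv\, d\sigma,
\end{equation*}
defined on $H^1(\Omega)$. Since $h \ge 0$ on $\partial\Omega$, one has $a(u,u) \ge \|u\|_{H^1(\Omega)}^2$, so $a$ is coercive. If $w \in W^{2,p}(\Omega)$ solves the homogeneous problem, Green's formula (which is justified because $\Omega$ has a $C^{1,1}$ boundary and $w \in W^{2,p}(\Omega) \subset H^1(\Omega)$) gives $a(w,w) = 0$, hence $w \equiv 0$. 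For existence when $f \in L^p(\Omega)$, one can either invoke Grisvard's theorem directly, or, when $p \ge 2$, first produce a weak solution in $H^1(\Omega)$ by applying Lax--Milgram to the coercive form $a$ against the continuous linear functional $v \mapsto \int_{\Omega} f v\, dx$, and then upgrade the regularity to $W^{2,p}(\Omega)$ via Grisvard's theorem. For $1 < p < 2$, direct appeal to Grisvard's result is the cleanest route.

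The main obstacle is a bookkeeping one: one must match our formulation of the Robin problem to the hypotheses of Grisvard's Theorem 2.4.2.7. In particular, it is necessary to confirm that the sign condition $h \ge 0$ provides the coercivity hypothesis used there (together with the presence of the zero-order term $+w$ in the equation), and that the Lipschitz regularity of $h$ is sufficient for the trace theorem and integration-by-parts arguments on which Grisvard's proof relies. Once these points are checked, the lemma follows as a direct specialization, exactly as Lemma~\ref{lemRegLp} is a specialization of Grisvard's regularity results in the Neumann case ($h \equiv 0$).
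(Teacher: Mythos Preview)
Your proposal is correct and follows the same approach as the paper: the paper does not give an independent proof of this lemma but simply states that it is a special case of \cite[Theorem 2.4.2.7]{Grisvard1985}. You have gone further than the paper by spelling out why the hypotheses of Grisvard's theorem are met and by giving a self-contained uniqueness argument via the coercive form $a$, but this is additional detail rather than a different route.
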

We then repeat the steps in the proof of Proposition \ref{propRegRob}. For the type of Robin boundary condition studied here, the Green identity given in Proposition \ref{propGreen} can be replaced by the following inequality.

\begin{prop}
	\label{propGreenRob} Let $\Omega \subset \RR^n$ be  an open, bounded, and connected set with a $C^{1,1}$ boundary, and let $h$ be a Lipschitz function in $\overline \Omega$ with $h \ge 0$ on $\partial \Omega$. If $u$ is an eigenfunction of $-\Delta_{\Omega}^{R,h}$ associated with the eigenvalue $\mu$, and if $D$ is a nodal domain of $u$, then
	\begin{equation}
	\label{eqIneqGreen}
		\int_{D}\left|\nabla u\right|^2\,dx \le	\mu\int_{D}u^2\,dx.
	\end{equation}
\end{prop}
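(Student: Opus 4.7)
The plan is to mimic the proof of Proposition \ref{propGreen} given in Section \ref{subsecPropGreen}, with the only substantive change being the treatment of the boundary integral on $\Gamma_\alpha$, where the Robin condition now yields a non-zero (but sign-definite) contribution. First, by Proposition \ref{propRegRob}, the eigenfunction $u$ is $C^1$ up to the boundary, so I can extend it to a $C^1$ function $g$ on a neighbourhood $\mathcal O$ of $\overline \Omega$, and (replacing $u$ by $-u$ if needed) assume $u>0$ on the nodal domain $D$. I define the super-level sets $D_\alpha:=\{x\in D\,;\,u(x)>\alpha\}$ and $E_\alpha:=\{x\in E\,;\,g(x)>\alpha\}$ exactly as before, and obtain for regular values $\alpha>0$ of $g$ the decomposition $\partial D_\alpha=\Sigma_\alpha\cup\Gamma_\alpha\cup\gamma_\alpha$. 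The transversality argument used in Section \ref{subsecPropGreen} still applies, using the fact that on $\partial\Omega$ one has $\nu\cdot\nabla g=\frac{\partial u}{\partial\nu}=-hu$, which vanishes on $\gamma_\alpha$ precisely because $u=\alpha>0$ is bounded while the tangential component of $\nabla g$ must be non-zero at a regular value of $g_{|\partial\Omega}$; thus $\partial D_\alpha$ is Lipschitz.

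Next I apply Green's formula to $u_\alpha:=u-\alpha$ on the Lipschitz domain $D_\alpha$:
\begin{equation*}
\int_{D_\alpha}(-\Delta u_\alpha)u_\alpha\,dx=\int_{D_\alpha}|\nabla u|^2\,dx-\int_{\partial D_\alpha}u_\alpha\frac{\partial u_\alpha}{\partial\nu}\,d\sigma.
\end{equation*}
Using $-\Delta u_\alpha=\mu u$, the left-hand side equals $\mu\int_{D_\alpha}u^2\,dx-\alpha\mu\int_{D_\alpha}u\,dx$. On $\Sigma_\alpha$ the integrand $u_\alpha\frac{\partial u_\alpha}{\partial\nu}$ vanishes because $u_\alpha=0$. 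On $\Gamma_\alpha\subset\partial\Omega$ the Robin condition gives $\frac{\partial u_\alpha}{\partial\nu}=\frac{\partial u}{\partial\nu}=-hu$, so that
\begin{equation*}
\int_{\Gamma_\alpha}u_\alpha\frac{\partial u_\alpha}{\partial\nu}\,d\sigma=-\int_{\Gamma_\alpha}hu(u-\alpha)\,d\sigma.
\end{equation*}
The set $\gamma_\alpha$ has vanishing $(n-1)$-dimensional Hausdorff measure and contributes nothing. Combining these identities yields
\begin{equation*}
\int_{D_\alpha}|\nabla u|^2\,dx=\mu\int_{D_\alpha}u^2\,dx-\alpha\mu\int_{D_\alpha}u\,dx-\int_{\Gamma_\alpha}hu(u-\alpha)\,d\sigma.
\end{equation*}

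The crucial observation is now the sign condition $h\ge 0$: since $u>\alpha$ on $D_\alpha$ (in particular $u>0$ on $\Gamma_\alpha\subset\partial\Omega$) and $h\ge 0$ on $\partial\Omega$, the boundary integral $\int_{\Gamma_\alpha}hu(u-\alpha)\,d\sigma$ is non-negative. Dropping this non-negative term (and the non-negative quantity $\alpha\mu\int_{D_\alpha}u\,dx$) gives
\begin{equation*}
\int_{D_\alpha}|\nabla u|^2\,dx\le\mu\int_{D_\alpha}u^2\,dx.
\end{equation*}
I then invoke Sard's theorem to obtain a sequence $(\alpha_m)_{m\ge1}$ of positive regular values of $g$ (and of $g_{|\partial\Omega}$) tending to $0$, and pass to the limit by monotone convergence to conclude \eqref{eqIneqGreen}. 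The only conceptual step that differs from the Neumann proof is recognising that the sign assumption on $h$ is exactly what converts the Green identity into the one-sided inequality, which is all that is needed for the Faber--Krahn step of the Robin version of the main theorem; I do not expect any technical obstacle beyond reusing the transversality/Sard machinery from Section \ref{subsecPropGreen}.
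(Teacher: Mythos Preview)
Your proof is correct and follows essentially the same route as the paper's. One caveat on exposition: contrary to what you write, the Neumann transversality argument of Section~\ref{subsecPropGreen} does \emph{not} carry over verbatim (since $\nu\cdot\nabla g=-hu$ need not vanish on $\gamma_\alpha$, and your phrase ``which vanishes on $\gamma_\alpha$'' is not correct); the paper, like you, instead requires $\alpha$ to also be a regular value of $g_{|\partial\Omega}$, so that the \emph{tangential} part of $\nabla g$ is non-zero on $\gamma_\alpha$, and this is the actual reason $\partial E_\alpha$ and $\partial\Omega$ meet transversally.
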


\begin{proof}
	Using Proposition \ref{propRegRob} instead of Proposition \ref{propReg}, we essentially repeat the steps in the proof of Proposition \ref{propGreen}. However, two points have to be modified. First, since we do not in general have $\frac{\partial u}{\partial \nu}=0$ on $\partial \Omega$, the argument in Section \ref{subsecPropGreen} showing that $\partial E_{\alpha}$ and $\partial \Omega$ intersect transversally does not apply. However, if $\alpha>0$ is a regular value for the function $g:\partial \Omega\to \RR$, the tangential part of $\nabla g(x)$ is non-zero when $x\in \gamma_{\alpha}$. If $\alpha$ is also a regular value for $g$, we can proceed as in Section \ref{subsecPropGreen}. Applying Green's formula to $u_{\alpha}$, and using the Robin boundary condition, we obtain
	\begin{equation*}
		\mu\int_{D_{\alpha}} u^2\,dx-\alpha\mu\int_{D_{\alpha}}u\,dx=\int_{\Gamma_{\alpha}}hu u_{\alpha}\,d\sigma+\int_{D_{\alpha}}\left|\nabla u\right|^2\,dx.
	\end{equation*}
	Since $h\ge 0$ on $\partial \Omega$ and $u>0$ in $D_{\alpha}$, this implies
	\begin{equation}
	\label{eqIneqGreenApp}
	\mu\int_{D_{\alpha}} u^2\,dx\ge\int_{D_{\alpha}}\left|\nabla u\right|^2\,dx.
	\end{equation}
	Using Sard's theorem for $g$ and its restriction $g:\partial \Omega \to \RR$, we find a sequence $(\alpha_m)_{m\ge 1}$ of positive regular values for both functions, such that $\alpha_m\to 0$. We conclude by applying Inequality \eqref{eqIneqGreenApp} with $\alpha=\alpha_m$ and passing to the limit.
\end{proof}

The proof of Theorem \ref{thmPleijelRob} then follows the steps of Section \ref{secProof}, using Inequality \eqref{eqIneqWeylRob} instead of Inequality \eqref{eqIneqWeyl}, and Inequality \eqref{eqIneqGreen} instead of Equation \eqref{eqGreen}.

\paragraph{Acknowledgements} The author thanks Bernard Helffer for introducing him to this problem, for his suggestions and corrections, as well as advice and encouragement, and  Pierre B{\'e}rard for his careful reading of this work, for suggesting numerous improvements, and correcting the reflection argument of Section \ref{subSecRef}. The author also thanks Susanna Terracini for discussions of related results and possible extensions, Alessandro Iacopetti for his help with the regularity questions, and James Kennedy for discussions of the Robin case. This work was partially supported by the ANR (Agence Nationale de la Recherche), project OPTIFORM n$^\circ$ ANR-12-BS01-0007-02, and by the ERC, project COMPAT n$^\circ$ ERC-2013-ADG.

{\small
\bibliographystyle{plain}

\end{document}